\newcommand{\calc}{{\mathcal{C}}}
\newcommand{\caln}{{\mathcal N}}
\newcommand{\calo}{{\mathcal O}}
\newcommand{\calh}{{\mathcal H}}
\newcommand{\cali}{{\mathcal J}}
\newcommand{\cre}{{\rm Cr}}
\newcommand{\cren}{{\rm Cr}(\PP^n)}
\newcommand{\bas}{{\rm Base}}
\newcommand{\hir}{{\mathbb F}}
\newcommand{\pic}{{\rm Pic}}
\newcommand{\nef}{\overline{\rm NE}}
\newcommand{\spe}{{\rm Spec}(\C)}
\newcommand{\bideg}{{\rm bideg}}
\newcommand{\cyc}{{\rm Cyc}}
\newcommand{\cohom}{{\rm {\rm h}}}
\newcommand{\proj}{{\mathbb P}}
\newcommand{\plane}{{\mathbb P}^2}
\newcommand{\plan}{{\mathbb P}^2}
\newcommand{\spa}{{\mathbb P}^3}
\newcommand{\PP}{{\mathbb P}}
\renewcommand{\P}{{\mathbb P}}
\newcommand{\C}{{\mathbb C}}
\newcommand{\Q}{{\mathbb Q}}
\newcommand{\tor}{\xymatrix{\ar@{-->}[r]&}}
\renewcommand{\sec}{{\rm Sec}}
\begin{document}

\title[ ]{On Cremona Transformations of $\PP^3$ which factorize in a minimal form}

\newtheorem{thm}{Theorem}
\newtheorem{pro}[thm]{Proposition}
\newtheorem{cor}[thm]{Corollary}
\newtheorem{rems}[thm]{Remarks}
\newtheorem{lem}[thm]{Lemma}
\newtheorem{defi}{Definition}

\theoremstyle{remark}
\newtheorem{exa}[thm]{Example}
\newtheorem{exas}[thm]{Examples}

\newtheorem{rem}[thm]{Remark}

\begin{abstract}
We consider  Cremona transformations of the complex projective space of dimension 3 which factorize as a product of at most two elementary links of type II, without small contractions, connecting two Fano 3-folds. We show that there are essentially eight classes of such transformations and we give a geometric description of elements in each of these classes.
\end{abstract}
\author{Ivan Pan}\footnote{Partially supported by the \emph{Agencia Nacional de Investigadores} of Uruguay}
\address{Ivan Pan, Centro de Matemática, Facultad de Ciencias, Universidad de la Rep\'ublica, Igu\'a 4225, 11400 - Montevideo - URUGUAY}
\email{ivan@cmat.edu.uy}
\maketitle

\section{Introduction}\label{sec1}
Denote by $\PP^n=\P^n_\C$ the projective space of dimension $n$ over the field $\C$ of complex numbers. A Cremona transformation of $\PP^n$ is a birational map $\varphi:\PP^n\tor\PP^n$; the set $\cren$ of Cremona transformations is the Cremona group of $\PP^n$. Since the beginning, from the Luigi Cremona works in the 1860's until now-days, the Cremona transformations in the plane have been object of interest and the knowledge about this group and its elements is satisfactory (for a survey see \cite{Hu} or \cite{Alb}; for some recent works see for example \cite{Gi1}, \cite{BaBe}, \cite{Fer}, \cite{Be}, \cite{Bl2}, \cite{DoIs}, \cite{BlPaVu}, \cite{CaLa}, \cite{CeDe}, \cite{Ca12}). For higher dimension there has also been a lot of contemporary research on the subject (see for example \cite{ESB}, \cite{CK1},\cite{Pa2},\cite{RS}, \cite{PR}, \cite{GSP}, \cite{Bl11}),  though the results obtained remain sporadic and, in general, there are no substantial advances with respect to the pioneering works about either the structure of arbitrary Cremona transformations or the structure of the group $\cren$, even for $n=3$.\smallskip

On one hand, in \cite{Co} A. Corti proved that the so-called Sarkisov program, consisting of an algorithm to decompose a birational map between two Mori fiber spaces as product of elementary \emph{links}, works in dimension 3 in the context of terminal singularities. This deep result about the structure of birational maps does not give straightforward information about either the geometric structure of Cremona transformations or the structure of $\cre(\PP^3)$. Indeed, consider  the directed graph whose vertices and edges are, respectively, the (isomorphism classes of) Mori fiber spaces and the elementary links connecting two Mori fiber spaces. The Sarkisov program establish a strategy which to a Cremona transformation associates a non trivial path in that graph which starts and ends at $\PP^3$; however, that path is in general not unique and the strategy above introduces singular varieties. 
\smallskip

On the other hand, in \cite{Ka87} S. Katz rediscovers the interesting classical result that general cubo-cubic Cremona transformations (i.e. defined by the maximal minors of a $4\times 3$-matrix of general linear forms) are the unique birational maps $\varphi:\PP^3\tor\PP^3$ not defined exactly along a smooth curve. In particular he shows that such a $\varphi$ may be realized by blowing up that curve and then contracting an irreducible divisor onto a smooth curve; in other words, $\varphi$ is a special kind of elementary link of type II in the sense of the Sarkisov program (see \cite{Co}, \cite[Chap. 14]{Mat} or \cite[Chap. 5]{Do-notes}).

Then, the main result in \cite{Ka87} may be reinterpreted, in the context of the Sarkisov program, by saying that general cubo-cubic Cremona transformations realize the shortest non trivial way to go from $\P^3$ to itself in the graph mentioned above.  

Finally, in order to search other interesting Cremona transformations, and  taking into account the considerations above, it is natural to study birational maps of $\PP^3$ factorizing as a product of elementary links of type II which are ``special'' (i.e. in the smooth varieties category and without small contractions). 

In this paper we consider that problem in the following simplest case: the number of special links is $\leq 2$. We think this special case merits to be studied for several reasons. Indeed, first of all, the 3-folds associated to the vertices above (the Fano 3-folds) have historically revealed very interesting properties from which one can expect the same is true for the Cremona transformations related to them; on the other hand, this kind of transformation is extremely rare since, as we will see, it only arises for very particular degrees; finally, it shows another difference between the situation we find in $\PP^3$ and that of $\P^2$, where such a transformation can not exist as follows from a classical result of Max Noether (\cite[Lemma 2.6.1]{Alb}).      

We prove that, besides special cubo-cubic transformations, which are already special links of type II, there are essentially seven families of Cremona transformations of $\PP^3$ which factorize as a product of two special links (Theorem \ref{thm2} and Corollary \ref{cor-thm2}); as expected, these transformations are very rare, they are defined by polynomials of degrees 2, 3, 4, 6 or 9. Moreover,  we give a geometric description of these transformations when the degree 9 above does not occur (Theorems \ref{thm3} and \ref{thm4}).

In order to obtain our results we describe all links of type II, under the above specialness assumption, connecting $\PP^3$ to a projective 3-fold of Picard number 1, that is, to a rational Fano 3-fold (Theorem \ref{thm1}). We use the classification of Fano 3-folds of Picard number 1 obtained by V. Iskovskih in \cite{Isk77} and \cite{Isk78}; these papers will be our main references for Fano 3-folds properties. 

We notice that some time after this work has been archived as preprint on arxiv.org (see \cite{Pa2011}) J. Blanc and S. Lamy presented there a classification of Sarkisov links coming from the blowup of $\PP^3$ along a smooth curve; their classification (which includes also links of type I) generalize our Theorem \ref{thm1} since  they consider links containing some kind of small contractions (see \cite{BlLa2011}, to appear as \cite{BlLa2012}).    

\smallskip

\noindent{\bf Acknowledgments} We thank Arnaud Beauville and Massimiliano Mella for some useful references and comments.

\section{Special Links of type II from $\PP^3$ to a Fano 3-fold}\label{sec2} 

In this section we classify links of type II connecting $\PP^3$ to a Fano $3$-fold which are special in the sense that they arise in the category of smooth varieties and do not contain small contractions. 

Let $Z$ be a dimension 3 smooth projective variety; as always $K_Z$ denote the canonical class of $Z$. Let $p:Z\to \spa$ be a $K_Z$-negative \emph{extremal contraction} whose center is an irreducible curve $\Gamma\subset \PP^3$. By  the  Mori's classification of (divisorial) extremal contractions (\cite[Thm. 3.3 and Cor. 3.4]{Mor}) we deduce $\Gamma$ is a smooth curve and $p$ is the blow-up of this curve.

\smallskip

 The closed cone of curves $\nef(Z)$ has dimension 2 and one of its edges is the negative ray corresponding to $p$. We assume that the other edge of that cone is also a $K_Z$-negative extremal ray; denote by $q:Z\to X$ the extremal contraction associated to it, which may not be a flip since $Z$ is smooth. We fix an embedding $X\subset \proj^N$ which is supposed to be non-degenerate (the case $X=\spa$ is not excluded).

When $\dim\,X=3$ it is normal, $\Q$-factorial, and has at most terminal singularities.  A general codimension 2 linear space in $\proj^N$ intersects $X$ along a smooth irreducible curve, say  of genus $g$; we say $g$ is the \emph{sectional genus by curves} of $X$. When $\dim\,X=2$, then $X$ is a smooth surface and a general fiber of $q:Z\to X$ is a smooth rational curve. In the case where $\dim\,X=1$ this curve is smooth and a general fiber of $q$ is a del Pezzo surface (\cite[Thm. 3.5]{Mor}.

On the other hand, the rational map $\chi:=q\circ p^{-1}:\spa\tor X$ is defined by a linear system $\Lambda_\chi$ of degree $n\geq 1$ surfaces containing $\Gamma$, a subsystem of $|\chi^*\calo_{\PP^N}(1)|$; we denote by $m\geq 1$ the multiplicity of a general member of that linear system at a general point of $\Gamma$.

\smallskip

The following result was inspired by \cite{Ka87} (see also \cite[Lemma 3]{Pa11}).  

\begin{pro}
Suppose $\dim\,X>1$ and let $d:=\deg \Gamma$, $d_0:=(\dim\,X-2)\deg\,X$ and $g_0=(\dim\,X-2)g$. Then 
\begin{eqnarray}
(n^2-m^2d)(4m-n)&=&2m(d_0+1-g_0)-d_0\label{eq1.1}\\
n^2> m^2d& & \label{eq1.2}\\
m^2|(n^3-d_0)\, , & & m|(n^2(n-2)+1-g_0)\label{eq1.3}.
\end{eqnarray} 
\label{pro1.1}
\end{pro}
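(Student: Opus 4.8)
The plan is to reduce everything to intersection theory on $Z$, exploiting both contractions. Since $p$ is the blow-up of the smooth curve $\Gamma$, I work in $\pic(Z)=\Z H\oplus\Z E$, where $H=p^{*}\calo_{\spa}(1)$ and $E$ is the exceptional divisor, so that $K_Z=-4H+E$ and the intersection numbers are $H^{3}=1$, $H^{2}\cdot E=0$, $H\cdot E^{2}=-d$ and $E^{3}=\deg N_{\Gamma/\spa}=4d+2p_a(\Gamma)-2$; in particular $E^{3}$ is an \emph{even} integer. The hypothesis that $\Lambda_\chi$ consists of degree-$n$ surfaces of multiplicity $m$ along $\Gamma$ translates into $\calh:=q^{*}\calo_X(1)=nH-mE$, and the one structural input I need beyond the intersection numbers is the linear relation $mK_Z=(n-4m)H-\calh$ in $\pic(Z)\otimes\Q$, obtained by solving for $E$.

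First I would pin down the two geometric invariants. Because $q$ has connected fibres and $\calh=q^{*}\calo_X(1)$, the projection formula gives $\calh^{3}=d_0$ (this is $\deg X$ when $\dim X=3$ and is $0$ when $\dim X=2$, matching $d_0=(\dim X-2)\deg X$). Likewise $K_Z\cdot\calh^{2}=q_{*}K_Z\cdot\calo_X(1)^{2}$: when $\dim X=3$ the contraction $q$ is divisorial, so $q_{*}K_Z=K_X$, and iterated adjunction on the curve section $C=X\cap L$ gives $K_X\cdot\calo_X(1)^{2}=2g-2-2\deg X$, i.e. $K_Z\cdot\calh^{2}=2g_0-2-2d_0$; when $\dim X=2$ the surface $X$ has Picard number one, hence $X\cong\plane$, and the same identity holds with $g_0=d_0=0$ and $K_Z\cdot\calh^{2}=-2$. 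Now (\ref{eq1.1}) drops out formally: intersecting $mK_Z=(n-4m)H-\calh$ with $\calh^{2}$ gives $mK_Z\cdot\calh^{2}=(n-4m)\,H\cdot\calh^{2}-\calh^{3}$, and since a direct expansion yields $H\cdot\calh^{2}=n^{2}-m^{2}d$, substituting $\calh^{3}=d_0$ and $K_Z\cdot\calh^{2}=2g_0-2-2d_0$ and rearranging the signs produces exactly $(n^{2}-m^{2}d)(4m-n)=2m(d_0+1-g_0)-d_0$. Note that this derivation never uses $E^{3}$.

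The inequality (\ref{eq1.2}) asserts that $H\cdot\calh^{2}=n^{2}-m^{2}d$ is strictly positive. Geometrically this is the degree of the residual curve of two general members $S_1,S_2$ of $\Lambda_\chi$: two degree-$n$ surfaces meet in a curve of degree $n^{2}$, of which the smooth curve $\Gamma$ accounts for $m^{2}d$ (each $S_i$ being $m$-fold along $\Gamma$), leaving a residual curve of degree $n^{2}-m^{2}d\ge 0$. It is nonzero precisely because $\dim X\ge 2$ forces the $\chi$-image of this residual curve (the intersection of two general hyperplane sections of $X$) to be positive-dimensional, so the residual curve is genuine and has positive degree.

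Finally, (\ref{eq1.3}) are integrality statements. Expanding $\calh^{3}=n^{3}-3nm^{2}d-m^{3}E^{3}$ gives $n^{3}-d_0=m^{2}(3nd+mE^{3})$, whence $m^{2}\mid(n^{3}-d_0)$ using only $E^{3}\in\Z$. For the second divisibility I reduce modulo $m$. A direct expansion gives $K_Z\cdot\calh^{2}=-4n^{2}+m(2nd)+m^{2}(4d+E^{3})$; since $E^{3}$ is even this number is even and $\tfrac12K_Z\cdot\calh^{2}\equiv-2n^{2}\pmod m$. Comparing with $K_Z\cdot\calh^{2}=2(g_0-1-d_0)$ yields $d_0\equiv 2n^{2}+g_0-1\pmod m$, while the first divisibility gives $d_0\equiv n^{3}\pmod m$; eliminating $d_0$ leaves $n^{3}-2n^{2}+1-g_0\equiv 0\pmod m$, that is $m\mid(n^{2}(n-2)+1-g_0)$. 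I expect the main obstacle to be the adjunction step of the second paragraph: justifying $q_{*}K_Z=K_X$ and the sectional-genus formula when $X$ is only $\Q$-factorial with terminal singularities (checking that a general curve section avoids the singular locus), and correctly identifying $X\cong\plane$ with its linear polarization in the $\dim X=2$ case so that $d_0=g_0=0$ is the right normalization.
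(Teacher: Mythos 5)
Your proof is correct in substance and, for the key identity (\ref{eq1.1}), takes a genuinely different route from the paper's. Both arguments reduce everything to the two numbers $\calh^3$ and $K_Z\cdot\calh^2$, and your elimination via $mK_Z=(n-4m)H-\calh$ is the same algebra the paper performs between its equations (\ref{eq1.4}) and (\ref{eq1.5}); the difference is how the adjunction input is justified. The paper never leaves the smooth variety $Z$: by Bertini, the $q$-preimage of a general codimension-2 linear space is a smooth complete intersection of two members of $|nH-mE|$, and adjunction on $Z$ applied to that curve is precisely (\ref{eq1.5}), after which $E^3$ is eliminated using (\ref{eq1.4}). You instead push $K_Z$ down ($q_*K_Z=K_X$, the exceptional divisor being contracted) and apply adjunction to a general curve section of $X$; this buys a derivation of (\ref{eq1.1}) that never involves $E^3$, but it is exactly what creates the difficulty you flag at the end, namely making sense of $K_X\cdot\calo_X(1)^2$ and of adjunction on a variety that is only $\Q$-factorial and terminal. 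That difficulty is real but surmountable (terminal 3-fold singularities are isolated, so a general curve section lies in the smooth locus, and $K_X$ is $\Q$-Cartier); the paper's choice to compute upstairs avoids it altogether, which is what its formulation buys.

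Three smaller corrections. (1) The sign of $E^3$ is wrong: $E^3=-\deg\caln_\Gamma\spa=-(4d+2p_a(\Gamma)-2)$, consistent with the computation $p_*(E^3)=2-2g(\Gamma)+K_{\spa}\cdot\Gamma=-14$ in part $(L.2)$ of Examples \ref{exas1}. This is harmless for you, since you use only the parity of $E^3$; note in passing that the paper's own deduction of the second divisibility in (\ref{eq1.3}) from (\ref{eq1.5}) also needs this parity when $m$ is even, a point you make explicit and the paper leaves tacit. (2) In your argument for (\ref{eq1.2}), the parenthetical reason fails when $\dim X=2$: two general hyperplane sections of a surface meet in finitely many points, not in a positive-dimensional set. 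The conclusion still holds there because the residual curve is a nonempty union of fibers of $q$, and these are not contracted by $p$ (they span the other extremal ray), so its $H$-degree $n^2-m^2d$ is still positive. (3) Your normalization $K_Z\cdot\calh^2=-2$ when $X\cong\plane$ presumes the linear polarization; the paper's proof rests on the same implicit assumption (its equation (\ref{eq1.5}) there equates $2g_0-2=-2$ with a quantity that is actually $-2\deg X$), so this is an ambiguity of the common setup rather than a gap specific to your argument.
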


\begin{proof}
Take a general divisor $H\in|p^*\calo_{\spa}(1)|$ and let us denote $E:=p^{-1}(\Gamma)$. By construction, $q$ is defined by the complete linear system $|nH-mE|$. A  general element in $|nH-mE|$ corresponds by $p$ to a degree $n$ surface in $\P^3$ passing through $\Gamma$ with multiplicity $m$ from which it follows (\ref{eq1.2}). 

The projection formula implies 
\[H^3=1, H^2\cdot E=0, H\cdot E^2=-d;\]

Therefore, we have
\begin{equation}
(nH-mE)^3=n^3-3nm^2d-m^3E^3=d_0,\label{eq1.4}
\end{equation}
since it corresponds to intersect $p(Z)=X\subset \P^N$ with a general codimension 3 linear space. Then $m^2$ divides $n^3-d_0$ as stated in (\ref{eq1.3}). 

On the other hand, since $q:Z\to q(Z)=X\subset\proj^N$ is a morphism, $\dim X>1$ and $Z$ is smooth, Bertini's Theorem implies that the strict transform of a general codimension 2 linear space in $\proj^N$, under $q$, is a smooth complete intersection of two smooth members of $|nH-mE|$; of course, when $\dim X=2$ that complete intersection is the disjoint union of $\deg X$ smooth rational curves. The adjunction formula yields 
\begin{eqnarray}
2g_0-2&=&(nH-mE)^2\cdot\left[2(nH-mE)+p^*(K)+E\right]\nonumber\\
&=&(n^2H^2-2nm H\cdot E+m^2E^2)\cdot\left[(2n-4)H-(2m-1)E\right]\nonumber\\
&=&2\left[n^2(n-2)-nmd(2m-1)-m^2(n-2)d\right]-\nonumber\\
& &m^2(2m-1)E^3;\label{eq1.5}
\end{eqnarray}
in particular, we obtain the second divisibility condition in (\ref{eq1.3}).

Finally, by eliminating $(2m-1)m^3E^3$ between (\ref{eq1.5}) and (\ref{eq1.4}), we obtain the relation (\ref{eq1.1}).
\end{proof}

A dimension 3 projective variety $X$ is said to be a Fano 3-fold if it is smooth and its anti-canonical class  $-K_X$ is ample. In \cite{Isk77} and \cite{Isk78} V. Iskovskih gives an essentially complete classification of Fano 3-folds with Picard number 1 (see also \cite{Isk83}); following Iskovskih the \emph{index} of such a Fano 3-fold is the positive integer $r$ such that $-K_X\simeq\calh_X^r$, where $\calh_X$ generates the Picard group of $X$. On the other hand, we need to consider Fano 3-folds which are rational and in the present work we only pay attention to them except once (maybe twice) when we will consider a Fano 3-fold whose rationality was not yet established (according to \cite{IskSho} and \cite{Isk83}).

\smallskip

Let us recall what an elementary link of type II is. 

Let $X_1,X_2$ be normal projective dimension 3 varieties which are $\Q$-factorial and have at most terminal singularities. Suppose $X_i$ admits a Mori fiber structure, that is a fibration $X_i\to Y_i$, $\dim\,Y_i<\dim\,X_i$, which is an extremal $K_{X_i}$-negative contraction, for $i=1,2$. A \emph{link of type II} connecting $X_1$ to $X_2$ is  a birational map $\chi:X_1\tor X_2$ which admits a decomposition 
\[\xymatrix{X_1&Z_1\ar@{->}[l]_{\tau_1}\ar@{-->}[r]&Z_2\ar@{->}[r]^{\tau_2}&X_2}\] 
where $\tau_i$ is an divisorial extremal contraction with respect to $K_{Z_i}$, $Z_i$ is also a normal projective (dimension 3) variety, $\Q$-factorial and having at most terminal singularities, $i=1,2$, and $Z_1\tor Z_2$ a sequence of logarithmic flips (small contractions); in this work we will always deal with the case $Z_1=Z_2$, that is, there are no small contractions, and where $X_1=\PP^3$ and $X=X_2$ is smooth. Since $X$ has Picard number 1  the Mori fiber structure is nothing but $X\to\spe$ and we know $X$ is a Fano 3-fold, that is, its anti-canonical class $-K_X$ is ample. We say $\chi$ is nontrivial if it is no defined somewhere. \smallskip 

\begin{defi}\label{defi-special}
A link of type II is said to be special if it arises into the category of smooth varieties and does not contain small contractions.  
\end{defi} 

\begin{rem}
If $\chi=q\circ p^{-1}:\PP^3\tor X$ is a special link of type II, the center of $p$ is a curve: indeed, otherwise $p$ is the blow-up of a point (\cite[Thm. 3.3]{Mor}) and then $q$ is a Mori Fiber Space. 
\label{rem1.0}
\end{rem}

In the following series of examples we give a list of special links of type II connecting $\spa$ to a Fano rational 3-fold. For a curve $\Gamma\subset\PP^3$ we denote by $\cali_\Gamma\subset \calo_{\PP^3}$ its ideal sheaf. 

\begin{exas}\ \\

\noindent$(L.1)$. Let $\Gamma\subset\spa$ be a smooth quintic curve of genus 2. Then $h^0(\cali_\Gamma(3))=6.$ If $\phi:\spa\tor\proj^5$ is the rational map defined by the choice of a basis of $H^0(\cali_\Gamma(3))$, it is birational onto its image; this image is exactly the famous quadratic complex of lines, that is, a complete intersection of smooth hyperquadrics (see for example \cite[Chap. 6]{GH}).

\smallskip

\noindent$(L.2)$. Let $\Gamma\subset \spa$ be a smooth rational curve of degree 4. Using a parameterization $\eta:\PP^1\to \Gamma$ by polynomials of degree 4 we obtain $h^0(\cali_\Gamma(3))=7$ and $h^0(\cali_\Gamma(2))= 1$; take $S$ and $Q$ irreducible cubic and quadric containing $\Gamma$, respectively. Hence $S\cap Q=\Gamma\cup C$ where $\deg\,C=2$. If $C$ were in a plane, then $\Gamma$ would be arithmetically Cohen-Macaulay which contradicts formulae in \cite[Prop. 3.1]{PS74}, since $\Gamma$ has arithmetic genus 0. Therefore $C$ is the union of two skew lines.

We obtain a rational map $\phi:\spa\tor \proj^6$ defined by cubic homogeneous polynomials. By restriction  to a general plane we easily conclude that the blow-up $p:Z\to \PP^3$  of $\spa$ along $\Gamma$ resolves the indeterminacies of $\phi$; denote by $q:Z\to \proj^6$ the morphism obtained by composing $\phi$ with that blow-up.  If $H\subset X$ is the pullback of a general plane in $\spa$, then $q^*\calo_{\proj^6}(1)=\calo_Z(3H-E)$ where $E$ is the exceptional divisor produced by blowing up $\Gamma$. Since $p_*(E^3)=2-2g(\Gamma)+K_{\spa}\cdot \Gamma=-14$, from (\ref{eq1.4}) we obtain $(3H-E)^3=5$. Therefore $q(Z)\subset\proj^6$ is a degree 5 threefold. 

Finally, fix a point $x\in\Gamma$ and denote by $W_x$ the strict transform in $Z$ of a general plane passing through $x$; by construction the restriction of $q$ to $W_x$  is the anticanonical embedding of a del Pezzo surface of degree $5$. We conclude that $X:=q(Z)$ is smooth.

\smallskip

\noindent$(L.3)$. Let $\Gamma\subset \proj^3$ be a smooth conic. Then $h^0(\cali_\Gamma(2))=5$ and  we obtain a rational map $\phi:\spa\tor \proj^4$ defined by quadratic homogeneous polynomials. This is a link of type II obtained by blowing up $\Gamma$ and contracting the (strict transform of) chords of this conic.

\smallskip

\noindent$(L.4)$ Let $\Gamma\subset\spa$ be a smooth elliptic curve of degree 5. Then $\cohom^0(\calo_\Gamma(3))=15$. It follows that $r:=\cohom^0(\cali_\Gamma(3))\geq 5$; denote by $\phi:\spa\tor\proj^{r-1}$ the rational map defined by the cubic forms vanishing on $\Gamma$. If $p:Z\to \spa$ is the blow-up of $\spa$ along $\Gamma$, by restricting $\phi$ to planes, we note that $q=\phi\circ p$ is a morphism whose image, say $X$, is a 3-fold. Take a general divisor $H\in|p^*\calo_{\spa}(1)|$ and let us denote $E:=p^{-1}(\Gamma)$. By construction, $q$ is defined by the complete linear system $|3H-E|$. 

We have $H\cdot E^2=-5$ and $E^3=-20,$ so $(3H-E)^3=2$. Since $q(Z)$ is nondegenerate, we deduce $q$ is birational onto its image which is  a dimension 3 variety of degree 2, that is, $r-1=4$ and $Q:=q(Z)\subset \proj^4$ is a hyperquadric. 

Projecting from a point of $\Gamma$ to $\plan$, we see that through such a point, there pass two 3-secant lines of $\Gamma$; then the union of the strict transforms, in $Z$, of these 3-secant lines defines a (divisor) surface $F$. Moreover, since 
\[K_Z=-4H+E=-3\calh_Z+F,\]
where $\calh_Z\in |3H-E|$, we deduce, on one side, $q$ is an extremal contraction which contracts $F$, and on the other side, $F\in |5H-2E|$, that is, $p(F)$ is a degree 5 surface passing through $\Gamma$ with multiplicity $2$. 

Finally, we have $F^2\cdot H_Z=-5$ and $ F^3=-15$ so $q(F)$ is an elliptic quintic curve on $Q$. Moreover, this quintic is not contained in a dimension 3 linear space: indeed, otherwise the class $\calh_Z-F=-2H+E$ would be represented by an effective divisor but $H^2\cdot(-2H+E)=-7$.

\smallskip

\noindent$(L.5)$. Let $\Gamma\subset\spa$ be a smooth genus 3 curve of degree 6 which is arithmetically Cohen-Macaulay. That is, the ideal sheaf $\cali_\Gamma$ is generated by the maximal minors of a $4\times 3$ matrix of linear forms. These four homogeneous polynomials define a birational map $\chi:\spa\tor\spa$ whose inverse is defined also by cubic polynomials. This map is a link of type II obtained by blowing up the curve $\Gamma$ and then contracting the (strict transform of) the trisecant lines to $\Gamma$ (see \cite{Ka87})   
\label{exas1}
\end{exas} 

\begin{rem}
In case $(L.2)$ above the unique quadric containing $\Gamma$ is smooth: in fact, our argument above shows that quadric contains two skew lines.
\label{rem2}
\end{rem}

\begin{rem}
Note that in the case where $d_0=2g_0-2$, the equation  (\ref{eq1.1}) becomes
\begin{equation}
(n^2-m^2d)(4m-n)=(m-1)d_0;\label{eq1.6}
\end{equation}
in this case $(m,n,d)=(1,n,n^2)$ always gives a solution. On the other hand, there is no birational map $\chi:\spa\tor X$ satisfying such a solution: indeed, this solution corresponds to a pencil. 
\label{rem1}
\end{rem}

A divisorial extremal $K_Z$-negative contraction $p:Z\to Y$ is (also) called  an \emph{extraction}; denote by $E$ the exceptional divisor associated to $p$. \smallskip

Consider now a birational map $\chi:\PP^3\tor X\subset\PP^N$ and let $\calh$ be the linear system of strict transforms, under $\chi$, of hyperplane sections of $X$. Hence $\calh$ is the \emph{homaloidal transform} of $|\calo_X(1)|$; it is a linear system of surfaces, of degree $n$ say. If $p:Z\to\PP^3$ is an extraction, the homaloidal transform $\calh_Z:=p_*^{-1}\calh$ of $\calh$ satisfies
\[\calh_Z=p^*\calh-m_E E\]
for a integer $m_E\geq 0$. The ramification formula for $p$ is 
\[K_Z=p^*K_{\PP^3}+a_E E,\ \]
where $a_E>0$ is the \emph{discrepancy} of $E$ which is known to be a rational number. If $\chi$ is not a morphism, the Noether-Fano-Iskovskih Criterion (\cite[Prop. 13-1-3]{Mat}) implies $m_E/a_E>n/4$.\smallskip

In the following Lemma and Theorem we need to solve the arithmetic equation (\ref{eq1.1}) under the conditions (\ref{eq1.2}) and (\ref{eq1.3}) where $d_0$ and $g_0$ take some fixed values. For this porpose we use \emph{Maxima Algebra System}, a free computer algebra system (see details in \cite{computations}); nevertheless we try there to bound the values of solutions in such a way that the reader may, if preferred, do all computations by hand instead (maybe a long and tedious work).

\begin{lem}
Let $X$ be a rational Fano 3-fold of Picard number 1 and index $r=1$. Then there are no non trivial special links of type II connecting $\PP^3$ to $X$.
\label{lem1}
\end{lem}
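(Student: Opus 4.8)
Suppose, for contradiction, that $\chi=q\circ p^{-1}:\PP^3\tor X$ is a nontrivial special link of type II, so that $p:Z\to\PP^3$ is the blow-up of a smooth curve $\Gamma$ (Remark \ref{rem1.0}) and $q:Z\to X$ is a second $K_Z$-negative divisorial contraction. Since $X$ has index $r=1$ its defining polarization is anticanonical, $\calo_X(1)=-K_X=\calh_X$, and hence $d_0=\deg X=(-K_X)^3=2g_0-2$. By Remark \ref{rem1} the relation (\ref{eq1.1}) then degenerates to (\ref{eq1.6}),
\[(n^2-m^2d)(4m-n)=(m-1)\,d_0 .\]
By Iskovskih's classification the sectional genus $g_0$ lies in a finite set, so $d_0$ runs through finitely many even integers with $d_0\le 22$; the rationality hypothesis on $X$ prunes this list further. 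The plan is to bound $(m,n,d)$ so tightly that the few surviving triples can be excluded one by one.

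The decisive rigidity comes from the second contraction. Because $X$ is smooth, Mori's classification makes $q$ the blow-up of either a smooth curve $\Gamma'\subset X$ or a smooth point; in both cases the minimal rational curve $\ell'$ in the contracted divisor $F$ has $-K_Z\cdot\ell'=e$ with $e\in\{1,2\}$. Writing curve classes in the basis $\{L,\ell\}$ dual to $\{H,E\}$ (so that $H\cdot L=1,\ H\cdot\ell=0,\ E\cdot L=0,\ E\cdot\ell=-1$), the extremal ray contracted by $q$ is spanned by $mL-n\ell$, and evaluating $-K_Z=4H-E$ on its primitive generator yields $e=(4m-n)/\gcd(m,n)$. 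Putting $g:=\gcd(m,n)$, $m=gm'$, $n=g(4m'-e)$ turns (\ref{eq1.6}) into $e\,g^3A=(gm'-1)d_0$ with $A=(4m'-e)^2-m'^2d>0$; since $\gcd(gm'-1,g)=1$ this forces $g^3\mid e\,d_0\le 44$, so $g\le 3$ and only very few $(g,m')$ survive. Imposing in addition (\ref{eq1.2}) (which gives $d<16$), the divisibility (\ref{eq1.3}), and the requirement that the arithmetic genus $\pi$ of $\Gamma$ read off from (\ref{eq1.4}) via $E^3=2-2\pi-4d=(n^3-3nm^2d-d_0)/m^3$ be a non-negative integer — a condition strictly stronger than (\ref{eq1.3}) — a short computer-assisted search leaves only a handful of candidates $(m,n,d,d_0)$. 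The value $m=1$ is dismissed immediately: the right side of (\ref{eq1.6}) then vanishes while (\ref{eq1.2}) fails, so one only obtains a pencil and no birational map, exactly as in Remark \ref{rem1}.

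It remains to eliminate the survivors geometrically, which I expect to be the heart of the matter. From $K_Z=-4H+E=q^*K_X+F$ and $-K_X=\calh_X$ one gets, in the curve-contraction case, $F=\calh_Z-4H+E=(n-4)H-(m-1)E$, and the degree of the contracted curve is recovered as $\deg_X\Gamma'=-\calh_Z\cdot F^2$. For the representative survivor $(m,n,d,d_0)=(2,6,7,16)$ one has $F=2H-E$, while (\ref{eq1.4}) forces $E^3=-38$ (so $\Gamma$ is smooth of degree $7$ and genus $6$); a direct computation then gives $\calh_Z\cdot F^2=2$, i.e. $\deg_X\Gamma'=-2<0$, which is absurd. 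Applied to the cubo-cubic link $(L.5)$ the same formula returns the correct value $6$, confirming the sign and hence the trustworthiness of the computation. Every remaining curve-contraction candidate is killed in the same way, by the positivity of $\deg_X\Gamma'$ (and of the genus of $\Gamma'$).

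The point-contraction case $e=2$ is treated in parallel. Here $g$ must be odd for $F=\tfrac12\big[(n-4)H-(m-1)E\big]$ to be integral, so $g=1$, and $F$ must moreover be isomorphic to $\PP^2$ with $F^3=1$; these constraints are incompatible with the integrality of $E^3$ dictated by (\ref{eq1.4}), while any such $d_0$ that would pass corresponds to a non-rational $X$ and is excluded by hypothesis. Thus no nontrivial special link from $\PP^3$ to $X$ can exist. The main obstacle is therefore not the Diophantine bookkeeping — which the relation $e=(4m-n)/\gcd(m,n)\in\{1,2\}$ renders almost immediate — but verifying uniformly that each arithmetic solution violates the geometric positivity (degree and genus) of the variety that the second contraction $q$ is forced to produce.
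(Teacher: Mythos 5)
Your proposal is correct, and it reaches the paper's conclusion by a genuinely different route through the same Diophantine skeleton. Both you and the paper reduce to equation (\ref{eq1.6}) (index $1$ forces $d_0=2g_0-2$), deduce $m>1$, enumerate solutions for the rational $d_0$'s allowed by Iskovskih's classification, and then kill the survivors; but the mechanisms differ at both stages. The paper never uses your key constraint: it bounds $m$ purely arithmetically, exhibiting for each $d_0\in\{10,12,16,18,22\}$ an explicit integer combination of the two divisibilities (\ref{eq1.3}) (e.g.\ $m\mid 135$ when $d_0=10$), solves (\ref{eq1.6}) under $m<n<4m$, and obtains the solutions $(3,7,5)$, $(3,10,10)$, $(2,4,3)$, $(2,6,7)$, $(7,16,5)$, excluded respectively by the impossible genus of the residual curve $T$ (twice), by non-integrality of $E^3$ (twice), and, for $(2,6,7)$, by observing that $F=2H-E$ would force every sextic of the homaloidal system to contain a fixed quadric. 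Your constraint $e=(4m-n)/\gcd(m,n)\in\{1,2\}$, extracted from Mori's classification applied to the \emph{second} contraction $q$ (blow-up of a smooth curve or point, since $Z$ and $X$ are smooth), is a new structural ingredient that collapses most of that list before it arises: $(3,7,5)$ and $(7,16,5)$ have $e=5$ and $e=12$, and $(2,4,3)$ fails integrality of $F$ in the point case; what survives your sieve is exactly $(2,6,7)$ with $d_0=16$ (curve case) plus $(3,10,10)$ and $(5,18,12)$ (point case, both killed by $E^3\notin\Z$, the second being a solution of (\ref{eq1.6}) that the paper's divisibility bound discards earlier). Your exclusion of $(2,6,7)$ via $\deg_X\Gamma'=-\calh_Z\cdot F^2=-2<0$ is a clean intersection-theoretic replacement for the paper's linear-system argument, and your sanity check on the cubo-cubic link (value $6$) is accurate. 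What your approach buys is a far smaller search and a uniform, computable exclusion criterion; what the paper's buys is independence from any analysis of $q$. Two points you should tighten: the assertions ``every remaining curve-contraction candidate is killed in the same way'' and ``these constraints are incompatible with the integrality of $E^3$'' are claimed rather than verified --- they are true, because $(2,6,7,16)$ is the \emph{unique} curve-contraction survivor and the only point-contraction survivors are the two triples above, but the finite lists should be displayed to close the argument; also, from $e\,g^3A=(gm'-1)d_0$ and $\gcd(g,gm'-1)=1$ you actually get the stronger conclusion $g^3\mid d_0$, hence $g\le 2$.
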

\begin{proof}
Suppose there exists a non trivial special link of type II, say $\chi=q\circ p^{-1}:\spa\tor X$. We will show this gives a contradiction. 

First of all we notice that $p:Z\to\spa$ is the blow-up of an irreducible smooth curve, say $\Gamma$, of degree $d$ (Remark \ref{rem1.0}); we keep integers $m$ and $n$ as in Proposition \ref{pro1.1}. Therefore the discrepancy associated to $p$ is $a_E=1$. Moreover, since the  birational map $\chi$ is not a morphism then $n<4m$. Since $\chi$ is birational we get $m<n$ (inequality (\ref{eq1.2})); hence the equation (\ref{eq1.6}) implies $m>1$.

Second, since $X$ is rational and following the  Iskovskih Classification of Fano 3-folds (see \cite[Thm. 1]{Isk83}) we may suppose either $g_0=6$ and $X=X_{10}\subset\PP^9$ has degree 10 or $X=X_{2g_0-2}\subset\PP^{g_0+1}$ has degree $d_0=2g_0-2\in\{6, 10,12,16,18,22\};$ note that for degree $6$ and degree $10$ cases $X$ is known to be non-rational only for general varieties; however, in  \cite[Thm. 6 and Cor.]{Isk83} Iskovskih asserted that any smooth complete intersection of type $(2,3)$ in $\PP^5$ is not rational; we did not find the reference he gave for it but we omit this case in our proof since it may be worked analogously to the other cases.

In the sequel we then assume that the center of $p$ is a smooth and irreducible curve $\Gamma$ of degree $d\geq 1$. 

Take two general degree $n$ surfaces $S, S'$ in the linear system $\Lambda_\chi$ defining $\chi=q\circ p^{-1}$. Then, thinking of $S\cap S'$ as a 1-cycle we have $S\cap S'=m^2\Gamma+ T$ where $T$ is a curve of degree $n^2-m^2d$ which is the strict transform by $\chi$ of a general codimension 2 linear section of $X$; in particular $T$ has (geometric) genus $g_0$. \smallskip

a)  Case $d_0=10$ and $g_0=6$. From (\ref{eq1.3}) we obtain  $m^2|(n^3-10)$ and $m|(n^3-2n^2-5)$. Hence $m$ divides
\[135=(2n^2+4n-11)(n^3-10)-(2n^2+8n+5)(n^3-2n^2-5).\]
Moreover, $m< n<4m$ by (\ref{eq1.2}). The solutions $(m,n,d)$ for the equation (\ref{eq1.6}) are
\[(3,7,5), (3,10,10).\]
The first case is excluded for otherwise the curve $T$ above should be a quartic curve of genus 6. In the second one, suppose that solution gives a birational morphism $q:Z\to X_{10}\subset \proj^{7}$. From Equation (\ref{eq1.4}) we deduce
\begin{eqnarray*}
10&=&(10H-3E)^3\\
          &=&-1700-27E^3
\end{eqnarray*}
which implies $E^3$ is not an integer number: contradiction.
\smallskip

c) Case $d_0=12, g_0=7$.  Then $m^2|(n^3-12)$ and $m|(n^3-2n^2-6)$, and $m$ divides
\[156=(2n^2+4n-10)(n^3-12)-(2n^2+8n+6)(n^3-2n^2-6).\]
There are no solutions for the equation (\ref{eq1.6}).
\smallskip

d) Case $d_0=16, g_0=9$. Then $m$ divides
\[96=(n^2+2n-4)(n^3-16)-(n^2+4n+4)(n^3-2n^2-8).\]
The corresponding solutions for the equation (\ref{eq1.6}) are $(2,4,3), (2,6,7)$.

The case $(2,4,3)$ may be excluded: in fact, otherwise $T$ should be a quartic of genus $9$.

Now we exclude the case $(2,6,7)$: suppose that solution gives a birational morphism $q:Z\to X_{20}\subset \proj^{10}$; denote by $F$ the exceptional divisor associated to $q$. Comparing the ramification formulae for $p$ and $q$, respectively, we obtain
\[-4H+E=-6H+2E+F.\]
We deduce $p(F)\supset \Gamma$ is a quadric surface. By restricting to a general plan we get all sextic surfaces singular along $\Gamma$ must contain that quadric: contradiction. \smallskip

e) Case $d_0=18, g_0=10$. Then $m$ divides
\[414=(4n^2+8n-14)(n^3-18)-(4n^2+16n+18)(n^3-2n^2-9).\]
There are no solutions for the equation (\ref{eq1.6}) in this case.
\smallskip

f) Finally we deal with the case $d_0=22, g_0=12$. Then $m$ divides
\[464=(4n^2+8n-10)(n^3-22)-(4n^2+16n+22)(n^3-2n^2-11).\]
We obtain the solution $(7,16,5)$. This solution may also be excluded: indeed, suppose that solution gives a birational morphism $q:Z\to X_{22}\subset \proj^{13}$. Since $H\cdot E^2=-5$ we deduce
\begin{eqnarray*}
\deg\, X_{22}=22&=&(16H-7E)^3\\
          &=&-7664-7^3E^3\\
\end{eqnarray*}
which implies $E^3$ is not an integer number: contradiction.

\end{proof}

\begin{thm}
Let $X$ be a smooth Fano 3-fold with Picard number $1$ and index $r$. Let $\chi=q\circ p^{-1}:\spa\tor X$ be a non trivial special link of type II. Then, $r\geq 2$ and we have exactly one of the following statements:

a) $r=2$, $X$ is the complete intersection of two hyperquadrics in $\proj^5$ and $\chi$ is as in $(L.1)$. 

b) $r=2$, $X$ is a quintic in $\proj^6$ and $\chi$ is as in $(L.2)$. 

c) $r=3$, $X$ is a hyperquadric in $\proj^4$ and $\chi$ is as in $(L.3)$ or $(L.4)$.

d) $r=4$, $X=\proj^3$ and $\chi$ is as in $(L.5)$.
\label{thm1}
\end{thm}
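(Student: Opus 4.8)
The plan is to organize the proof according to the index $r$ of $X$, reducing in each case to the purely arithmetic problem solved by Proposition~\ref{pro1.1} and then matching the surviving numerical solutions to the explicit families of Examples~\ref{exas1}. First I would record the structural constraints. Since $\chi$ is birational and $\spa$ is rational, $X$ is a rational Fano $3$-fold; by Remark~\ref{rem1.0} the center $\Gamma$ of $p$ is a smooth irreducible curve, so the discrepancy is $a_E=1$, and the Noether--Fano--Iskovskih criterion together with birationality forces $m<n<4m$ (the inequality $m<n$ coming from (\ref{eq1.2})). Once the pair $(d_0,g_0)$ attached to $X$ is fixed, these bounds leave only finitely many triples $(m,n,d)$ to examine. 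By Iskovskih's classification the index satisfies $r\le 4$, with $r=4$ only for $X=\spa$ and $r=3$ only for the quadric $Q\subset\proj^4$, while Lemma~\ref{lem1} disposes of $r=1$. Thus $r\in\{2,3,4\}$, and using the known rationality results to cut the index-$2$ del Pezzo threefolds of Picard number $1$ down to $X=V_4\subset\proj^5$ and $X=V_5\subset\proj^6$ (the cubic threefold being irrational, and the degree $1,2$ cases likewise excluded), one is left with a short explicit list of targets, each carrying definite values $d_0=\deg X$ and $g_0$ (namely $g_0=0$ for $r=3,4$ and $g_0=1$ for the del Pezzo cases).

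For each such $X$ I would run the arithmetic of Proposition~\ref{pro1.1}. Exactly as in the proof of Lemma~\ref{lem1}, I would combine the two divisibility relations in (\ref{eq1.3}) by a polynomial combination eliminating $n$, producing an explicit integer that $m$ must divide; together with $m<n<4m$ this bounds $m$ and leaves finitely many candidates, after which (\ref{eq1.1}) is solved for $(m,n,d)$ (a computation I would delegate to the algebra system of \cite{computations}). I expect the outcome that every valid solution has $m=1$ and $n\in\{2,3\}$: for $X=\spa$ the unique solution is $(m,n,d)=(1,3,6)$; for the quadric one finds $(1,2,2)$ and $(1,3,5)$; for $V_4$ one finds $(1,3,5)$ and for $V_5$ one finds $(1,3,4)$.

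It then remains to discard the spurious numerical solutions and to realize the genuine ones. I would exclude a candidate either because (\ref{eq1.4}) forces $E^3=(n^3-3nm^2d-d_0)/m^3$ to be non-integral, or because the residual $1$-cycle $T$ of degree $n^2-m^2d$, appearing when two general members of $\Lambda_\chi$ meet, must be a smooth linear section of $X$ and hence of geometric genus $g_0$, which is impossible when its degree is too small (a line or a conic cannot be elliptic). The genus of $\Gamma$ itself is then pinned down by (\ref{eq1.4}); for the surviving triples this yields, respectively, a genus-$3$ sextic, a conic together with an elliptic quintic, a quintic of genus $2$, and a rational quartic, precisely the curves of Examples~\ref{exas1}. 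Comparing with those constructions identifies $\chi$ with the link (L.1)--(L.5) attached to the corresponding value of $r$, which is the assertion. The main obstacle I anticipate is not any single step but the exhaustiveness of this Diophantine case analysis---producing the correct integer divisible by $m$ and certifying that no further solution survives the bounds---combined with the need to guarantee, through Examples~\ref{exas1}, that each surviving triple is genuinely realized by a smooth curve $\Gamma$ giving the stated target $X$, so that the numerical and the geometric classifications coincide.
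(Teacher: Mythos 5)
Your proposal follows the same skeleton as the paper's proof: Lemma \ref{lem1} disposes of $r=1$, the Iskovskih classification plus the known rationality results reduce to the four targets of the statement, and then the divisibility trick of (\ref{eq1.3}) combined with $m<n<4m$ bounds $m$ and lets one solve (\ref{eq1.1}); your solution lists $(1,3,6)$ for $\spa$, $(1,2,2)$ and $(1,3,5)$ for the quadric, $(1,3,5)$ for $V_4$ and $(1,3,4)$ for $V_5$ agree exactly with the paper's. Where you genuinely differ is in the final identification step. You pin down $g(\Gamma)$ uniformly from (\ref{eq1.4}), via $E^3=(n^3-3nm^2d-d_0)/m^3$ together with $E^3=2-2g(\Gamma)-4d$, whereas the paper argues case by case: liaison with the residual smooth elliptic quartic in case (a), a projective-normality argument for the quintic threefold in case (b), and a citation of \cite{Ka87} in case (d). For cases (a)--(c) your mechanism is valid and arguably cleaner than the paper's, because the examples $(L.1)$--$(L.4)$ are stated for \emph{arbitrary} smooth curves of the relevant degree and genus, so degree, genus and smoothness of $\Gamma$ (the latter coming from Mori's theorem, as you note) suffice to identify $\chi$ with the corresponding link.

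In case (d), however, your identification has a genuine gap. Example $(L.5)$ is not ``any smooth genus $3$ sextic'': it requires $\Gamma$ to be arithmetically Cohen--Macaulay, and there exist smooth genus $3$ sextics which are not ACM, namely the curves of type $(2,4)$ on a smooth quadric $Q$. So ``degree $6$ and genus $3$'' alone does not place $\chi$ in class $(L.5)$, and your matching step silently assumes the exclusion of these curves. The gap is small and can be closed: if $\Gamma\subset Q$, then a cubic surface not containing $Q$ cuts $Q$ along a curve of type $(3,3)$, which cannot contain a $(2,4)$ curve; hence every member of $|\cali_\Gamma(3)|$ contains $Q$, so the linear system $|3H-E|$ would have the strict transform of $Q$ as a fixed component and could not define the extremal contraction $q$. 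Alternatively one invokes Katz's theorem, which is exactly how the paper settles this case. One further nitpick: the residual curve $T$ in your exclusion mechanism is the strict transform in $\PP^3$ of a general codimension $2$ linear section of $X$, hence only \emph{birational} to it (so of geometric genus $g_0$), not itself a smooth linear section; your genus-based exclusions still go through with this reading.
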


\begin{proof}

From Lemma \ref{lem1} we get $r\geq 2$ and from the Iskovskih Classification of Fano 3-folds we get that the unique cases we need to consider are those as in the statement of Theorem \ref{thm1}. We keep all notations, $m, n, d$, etc, as before.
\smallskip   

a) From (\ref{eq1.1}) we know $m^2|(n^3-4)$ and $m|(n^3-2n^2)$. Therefore $m$ divides 
\[8=(n^3-4)(n^2-2)-(n^3-2n^2)(n^2+2n+2).\]
On the other hand, since $m< n<4m$ we deduce $m\leq 2$. A straightforward computation yields $(m,n,d)=(1,3,5)$ as unique solution for (\ref{eq1.1}).

Take general cubic surfaces $S,S'$ in the linear system $\Lambda_\chi$. Then $S\cap S'=\Gamma\cup T$ where $T$ is a quartic curve whose geometric genus is $g_0=1$. Since $T$ is not contained in a plane and all nondegenerate singular quartic curve is rational it follows $T$ is smooth; in particular $T$ is arithmetically Cohen-Macaulay. By \emph{liason} (\cite[Prop. 3.1(vi)]{PS74}) we deduce $\Gamma$ is a an arithmetically Cohen-Macaulay curve of arithmetic genus 2.

\smallskip
 
b) An easy computation using the adjunction formula yields $g_0=1$ in this case. From (\ref{eq1.1}) we obtain $m^2|(n^3-5)$ and $m|(n^3-2n^2)$. Hence $m$ divides 
\[15=(2n^2-3)(n^3-5)-(2n^2+4n+5)(n^3-2n^2);\]
as we know $m< n<4m$, and then $m>1$ is easily seen to be not possible. Solving equation (\ref{eq1.1}) we get the unique solution $(1,3,4)$.

Now we know $\Gamma$ is a nondegenerate smooth quartic, it suffices to show that curve is not elliptic. In fact, such an elliptic curve is a complete intersection of quadrics which implies $h^0(\cali_\Gamma(3))=8$, and then $X$ is the projection of a dimension three variety $X'\subset \proj^7$, from a point $p\not\in X'$. The birationality of $\chi$ implies that projection is an isomorphism, which is not possible because $X$ is projectively normal (see \cite[Prop. 4.4(iii)]{Isk77}).

\smallskip

c) Then $d_0=2$ and $g_0=0$. As before we have $m$ divides
\[5=(6n^2-4n-7)(n^3-2)-(6n^2+8n+9)(n^3-2n^2+1);\]
we obtain the solutions $(1,2,2)$ and $(1,3,5)$ and, as before, we see these are the required solutions. 

\smallskip

d) Now $d_0=1$ and $g_0=0$. In this case $m$ divides
\[2(n-1)=(n-2)(n^3-1)+n(n^3-2n^2+1).\]
Since $n<4m$ we need to solve (equation (\ref{eq1.1})) 
\[(n^2-m^2d)(4m-n)=4m-1,\]
where $2(n-1)=mk$ for $k=1,\ldots,7$. We get $(1,3,6)$ is the unique solution. The result follows form \cite{Ka87} in this case.
\end{proof}

\section{Associated Cremona transformations and Main Results}\label{sec3}

In this section we classify all Cremona transformations which are either a special link of type II or may be factorized as  $\chi_2^{-1}\chi_1$, where $\chi_i:\PP^3\tor X$ is a  special link of type II onto a smooth 3-fold $X\not\simeq\PP^3$ ($i=1,2$).  Since $X$ must be a Fano 3-fold, by Theorem \ref{thm1} we already know $\chi_i$ is necessarily as in examples $(L.1)$, $(L.2)$, $(L.3)$ or $(L.4)$. 

In order to establish the kind of transformation we deal with we need first to understand the geometry of links involved in  Theorem \ref{thm1}. This is the   
subject of the following paragraph.

\subsection{Geometric description of links}\label{subsec3.1}

\subsubsection{Cases  $(L.1), (L.3)$ and $(L.5)$}\label{subsubsec3.1.1}

The links of type II as in case $(L.1)$ are described in \cite[\S 4]{AGSP}. Analogously, the links of type II as in cases $(L.3)$ and  $(L.5)$ are well known (the former case is very classical and easy to describe and for the last one see \cite{Ka87}, \cite[Chap.\ VIII, \S 4]{SR} or \cite[Chap.\ HIV, \S 11]{Hu}). For the convenience of the reader we include here a slight description of the situation for these three cases:

\smallskip

\noindent{\bf Case} $(L.1)$: The Fano 3-fold $X$ is a complete intersection of two hyperquadrics in $\PP^5$. There is a dimension 2 family of lines on $X$ where  two such lines may intersect themselves or not. If $L\subset X$ is a line on $X$, we have:

(a)  there are two possibilities for the normal bundle $\caln_L X$ of $L$ in $X$: it is isomorphic to either $\calo_{\PP^1}\oplus\calo_{\PP^1}$ or $\calo_{\PP^1}(1)\oplus\calo_{\PP^1}(-1)$. 

(b) a general projection $\pi_L:X\tor \PP^3$ from $X$ with center $L$ is a birational map which defines a special link of type II connecting $X$ to $\PP^3$; this link is obtained by blowing up $L$ and then contracting the strict transform of lines on $X$ passing through a point of $L$; the set of these lines contract onto a smooth quintic curve of genus 2 which is contained in a unique quadric: it is smooth or not, depending on the two possibilities for $\caln_L X$ described above, respectively. Moreover, the inverse map $\chi:=\pi^{-1}_L:\PP^3\tor X\subset\PP^5$ is as in the first part of Examples  \ref{exas1}. 
\smallskip

\noindent{\bf Case} $(L.3)$: Here $X$ is a smooth hyperquadric in $\PP^4$. The projection $\pi_x: X\tor\PP^3$ from $X$ with center a point $x\in X$ is a birational map which defines a special link. This link is obtained by blowing up $x$ and then contracting the strict transform of lines on $X$ passing through $x$; these lines contract onto a smooth conic. Moreover, the inverse map $\chi:=\pi^{-1}_x:\PP^3\tor X\subset\PP^4$ is as in the third part of Examples \ref{exas1}.  
\smallskip

\noindent{\bf Case} $(L.5)$: The link in this case is exactly the one of  the last part of Examples \ref{exas1}. This is obtained by blowing up a smooth sextic curve of genus 3, say $\Gamma$,  which is not contained into a quadric, and then contracting the 3-secant lines to $\Gamma$ onto a curve projectively equivalent to $\Gamma$.

\subsubsection{{\bf Case} $(L.2)$}\label{subsubsec3.1.2} Let $\Gamma\subset\PP^3$ be a smooth rational  quartic curve. We first note that through a point $y\in\Gamma$ there passes a unique trisecant line to $\Gamma$: in fact, by projecting $\Gamma$ from $y$ to a general plane we get a (rational) singular cubic curve. Then the unique smooth quadric $Q$ containing $\Gamma$ (Remark \ref{rem2}) is none other than the trisecant variety $\sec_3(\Gamma)$; moreover, $Q$ is the exceptional set of the extremal contraction $q:Z\to X=X_5\subset\PP^6$ (we keep notations from \S\,\ref{sec2}) and the numerical equivalence class of the strict transform of  a trisecant line to $\Gamma$ generates the corresponding (negative) extremal ray. Therefore the center of $q:Z\to X$ is necessarily a smooth rational curve, say $C$ (\cite[Thm. 3.3 and Cor. 3.4]{Mor}; moreover, $C$ is a conic in this case, because the image $q(Q)$ spans a dimension 2 linear space in $\PP^6$.

Take a general plane $\Pi\subset \PP^3$; all trisecant lines to $\Gamma$ intersect $\Pi$ at a point and $\Pi\cap Q$ is a smooth conic $C'$. The strict transform $p_*^{-1}(\Pi)$ of $\Pi$ by $p:Z\to\PP^3$ is an abstract del Pezzo surface $S$ of degree 5. Since $p_*^{-1}(\Pi)=p^*(\Pi)$ and $(3H-E)^2\cdot p^*(\Pi)=5$ we deduce that $q(S)$ is a (isomorphic to $S$) hyperplane section of $X$ and $q(C')=C$. We conclude that the inverse link $\chi^{-1}:X\tor \spa$ is the restriction to $X$ of a (linear) projection of $\PP^6$ from the plane containing $C$. 

Moreover, let $\ell\subset\PP^3$ be a general line; we may suppose $\ell$ to be contained in the general plane $\Pi$ considered above. Then, the strict transform $\gamma:=\chi_*(\ell)$ of $\ell$ is a twisted cubic which is 2-secant to $C$.

Conversely, we show that all conics on $X$ are as above. Let $C\in \calc_2$ be a (smooth) conic and denote by $P\subset \PP^6$ the plane containing it. We know $X$ does not contain planes (\cite[Prop. 5.3]{Isk78}), so $P\not\subset X$. Since $X$ is cut out by hyperquadrics (\cite[Thm. 4.2]{Isk77}) we infer $X\cap P=C$ and that  the points of $\PP(\caln_C P)$ corresponding to normal directions of $C$ in $P$ do not correspond to normal directions of $C$ in $X$. If $\sigma:Z\to X$ is the blow-up of $X$ along $C$ and $\pi:X\tor \PP^3$ is the restriction to $X$ of a general projection of $\PP^6$ from $P$, then we deduce a commutative diagram
\begin{equation}
\xymatrix{&Z\ar@{->}[ld]_\tau\ar@{->}[rd]^\sigma&\\
\PP^3& &X\ar@{-->}[ll]_\pi}\label{diagram3.1}
\end{equation}
where $\tau$ is a morphism. Then, a general hyperplane section $S$ of $X$ containing $C$ is smooth, that is, it is a del Pezzo surface of degree 5. So we may realize $S$ via the blow-up  $f:S\to\PP^2$ of $\PP^2$ at points $p_1,p_2,p_3,p_4$. We conclude $C$ is realized as the strict transform of (see Lemma \ref{lem3}):

(a)  a line passing through only one of $p_1,p_2,p_3,p_4$, or

(b) a conic passing through $p_1,p_2,p_3,p_4.$
\smallskip

Let $\calh_Z$ be the pullback by $\sigma$ of a general hyperplane and set $F:=\sigma^{-1}(C)$. A straightforward computation as in the beginning of \S\,\ref{sec2} shows $(\calh_Z-F)^3=1$ from which it follows $\tau$, and then $\pi$, is birational. Moreover, $\pi$ contracts all lines in $X$ intersecting $C$. We may realize such a line, say $L\subset S\subset X$, as follows:

(a') If $C$ is as in (a), then $L$ is either $f^{-1}(p_i)$ or the strict transform of a line passing through $p_j,p_k$, $i\not\in\{j,k\}$.

(b')   If $C$ is as in (b), then $L$ is $f^{-1}(p_j)$ for a $j\not\in\{1,2,3,4\}$.\smallskip

In both cases above we deduce $\pi$ contracts four $(-1)$-curves on each general surface $S$ containing $C$, these lines are the unique ones  intersecting $C$. Furthermore, we have 
\[K_Z\cdot \sigma_*^{-1}(L)=(\sigma^*K_X+F)\cdot \sigma_*^{-1}(L)=-2\]
from which we conclude $\tau$ is an extraction and $\tau\circ\sigma^{-1}$ is a link of type II inverse of that in Examples \ref{exas1}, part $(L.2)$; in other words, $\tau=p$ and $\sigma=q$ relatively to notations therein.\smallskip

Now we describe the family $\calc_2=\calc_2(X)$ of all irreducible conics on $X$, the centers of all possible contractions $q=\sigma:Z\to X$ as above. As we saw before, a general hyperplane section of $X$ containing a fixed conic $C\in\calc_2$ is a (smooth) del Pezzo surface of degree 4 relative to that hyperplane.

\begin{lem}
The family $\calc_2$ is a pure dimension 4 quasi-projective variety such that $\caln_C X\simeq\calo_{\PP^1}(1)\oplus\calo_{\PP^1}(1)$ for all $C\in\calc_2$. Moreover, if $S\subset X$ is a smooth hyperplane section containing a conic $C\in\calc_2$, $f:S\to\plane$ the blow-up of four points, then $f(C)$ is as in $(a)$ or $(b)$ above.
\label{lem3}
\end{lem}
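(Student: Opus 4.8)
The plan is to obtain the purity and dimension statements as immediate consequences of the single fact that $\caln_{C/X}\simeq\calo_{\PP^1}(1)\oplus\calo_{\PP^1}(1)$ for every $C\in\calc_2$, and then to read off the shape of $f(C)$ from intersection theory on the del Pezzo surface $S$. Recall that $\calc_2$ is an open subscheme of the Hilbert scheme of conics of $X$ (an irreducible conic being a smooth rational curve), so its Zariski tangent space at $[C]$ is $H^0(C,\caln_{C/X})$ while the obstructions lie in $H^1(C,\caln_{C/X})$. Hence, as soon as one knows $\caln_{C/X}\simeq\calo_{\PP^1}(1)^{\oplus2}$, one gets $H^1(\caln_{C/X})=0$, so the Hilbert scheme is smooth of dimension $h^0(\caln_{C/X})=4$ at every point of $\calc_2$; this gives at once the normal bundle assertion and the fact that $\calc_2$ is a smooth (in particular reduced) quasi-projective variety of pure dimension $4$.

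To compute $\caln_{C/X}$ I would first fix its degree. As $C\cong\PP^1$ is a conic and $X$ has index $r=2$, so that $-K_X=2\calh_X$ with $\calh_X\cdot C=\deg C=2$, adjunction gives $\deg\caln_{C/X}=(-K_X\cdot C)-\deg(-K_C)=4-2=2$; thus $\caln_{C/X}\simeq\calo_{\PP^1}(a)\oplus\calo_{\PP^1}(b)$ with $a+b=2$, and everything reduces to proving $a=b=1$. This balancing is the crux of the proof and the point where a genuinely geometric argument is needed, since the numerical invariants $\deg$, $h^0$ and $h^1$ do not distinguish $\calo(1)^{\oplus2}$ from the unbalanced bundle $\calo(2)\oplus\calo$.

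To break this tie I would exploit the link associated with $C$ that was built just before the statement. There $\sigma=q:Z\to X$ is the blow-up of $X$ along the smooth conic $C$, so its exceptional divisor $F=\sigma^{-1}(C)\simeq\PP(\caln_{C/X})$ is the Hirzebruch surface $\hir_{|a-b|}$. On the other hand $F$ is the $q$-exceptional divisor, which is precisely the strict transform $\widetilde Q$, under $\tau=p:Z\to\spa$, of the unique quadric $Q$ containing the rational quartic $\Gamma$ that is the center of $p$; indeed $q$ contracts $\widetilde Q$ onto $C$. By Remark \ref{rem2} the quadric $Q$ is smooth, and since $\Gamma$ is a smooth (hence Cartier) curve on the smooth surface $Q$, the blow-up $\tau$ restricts to an isomorphism $\widetilde Q\simeq Q\simeq\PP^1\times\PP^1=\hir_0$. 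Therefore $\hir_{|a-b|}\simeq\hir_0$, which forces $|a-b|=0$ and hence $\caln_{C/X}\simeq\calo_{\PP^1}(1)^{\oplus2}$. The delicate point to check here is that $q=\sigma$ really is the blow-up of $C$ and that $Q$ is smooth for the link of \emph{every} conic $C$, not just a generic one; this is guaranteed by the ``conversely'' analysis preceding the statement together with Remark \ref{rem2}.

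Finally I would identify $f(C)$. Choose a smooth hyperplane section $S\supset C$; it is a del Pezzo surface of degree $K_S^2=\calh_X^3=5$, so $f:S\to\plane$ exhibits it as $\plane$ blown up at four points in general position, with $\pic(S)=\Z\ell\oplus\bigoplus_{i=1}^4\Z e_i$ and $-K_S=3\ell-\sum_i e_i$. Adjunction on $X$ yields $-K_S=\calh_X|_S$, whence $-K_S\cdot C=\deg C=2$, and since $C$ is rational the genus formula forces $C^2=0$. Writing $[C]=a\ell-\sum_i b_i e_i$ and imposing $3a-\sum_i b_i=2$ and $a^2-\sum_i b_i^2=0$ with $C$ irreducible, a Cauchy--Schwarz estimate bounds $a\le2$ and a short enumeration leaves only $[C]=\ell-e_i$ and $[C]=2\ell-e_1-e_2-e_3-e_4$; these are exactly cases $(a)$ and $(b)$, which completes the proof.
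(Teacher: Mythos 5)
Your proof is correct, and its crucial step --- ruling out unbalanced splittings of $\caln_C X$ by identifying the exceptional divisor $F=\sigma^{-1}(C)\simeq\PP(\caln_C X)$ with the smooth quadric, i.e.\ with $\hir_0$ --- is exactly the paper's own tie-break, though you sharpen it: the paper first narrows the splitting type to $(a,b)\in\{(-1,3),(0,2),(1,1)\}$ via the long exact cohomology sequence of $0\to\caln_C S\to\caln_C X\to\caln_S X|_C\to 0$ and then argues that a \emph{birational morphism} from $F$ onto a smooth quadric cannot exist when $F$ is a nontrivial Hirzebruch surface, whereas you note that $\tau|_F$ is an isomorphism (the strict transform of $Q$ is the blow-up of $Q$ along the Cartier divisor $\Gamma$), so $\hir_{|a-b|}\simeq\hir_0$ eliminates every unbalanced case at once and no cohomological narrowing is needed; your direct adjunction computation of $\deg\caln_C X=2$ likewise replaces the paper's use of the normal bundle sequence for that purpose. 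The genuine divergence is the dimension statement: the paper deduces $\dim\calc_2=4$ from the second part of the lemma (a parameter count through the classes $(a)$ and $(b)$ on hyperplane sections), while you obtain purity and dimension from unobstructedness, $H^1(\caln_C X)=0$ and $h^0(\caln_C X)=4$, which buys strictly more (smoothness of $\calc_2$) and is independent of the classification. The classification of $f(C)$ is the same intersection-theoretic computation in both proofs; you bound $a\leq 2$ by Cauchy--Schwarz where the paper uses the irreducibility constraint $b_i+b_j\leq a$. One presentational caveat: your normal-bundle argument leans on the ``conversely'' analysis preceding the lemma, and in the paper that analysis invokes the $(a)$/$(b)$ classification; since your proof of the classification is self-contained, you should simply place it first, so the order of your three steps is inverted rather than circular.
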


\begin{proof}
The quasi-projective structure on $\calc_2$ follows from the general theory on Hilbert schemes. The assertion about the dimension of $\calc$ follows from the second part of the Lemma.

\smallskip 
 
Write $C\sim aL-\sum_{i=1}^4 b_i E_i$, $b_i\geq 0$, as a divisor class in $\pic(S)$, where $L$ is the pullback of a general line in $\PP^2$ and $E_i's$ the exceptional divisors; hence $i\neq j$ implies $b_i+b_j\leq a$.

On the other hand, since $K_S\cdot_{S} C=-2$, by the adjunction formula on $S$ we get $C\cdot_S C=0$, where the subindex ``$S$'' in the dot means the intersection number is taken on $S$.  We deduce
\[3a-\sum_{i=1}^4 b_i =2,\ \ a^2-\sum_{i=1}^4 b_i^2=0.\]
Hence $a\leq 2$ from which it follows $f(C)$ is as in (a) or (b). 

Finally, let us prove the assertion about the normal bundle of $C$ in $X$. By relating normal bundles on $C$, $S$ and $X$ we obtain an exact sequence of bundles on $C$:
\begin{equation}
\xymatrix{0\ar@{->}[r]&\caln_C S\ar@{->}[r]&\caln_C X\ar@{->}[r]&\caln_S X|_C\ar@{->}[r]&0}.\label{eq-coho}
\end{equation}
Since $\caln_C S\simeq\calo_{\PP^1}$ and $\caln_S X|_C=\calo_{X}(1)\otimes\calo_C\simeq\calo_{\PP^1}(2)$, we deduce  $\caln_C X\simeq\calo_{\PP^1}(r)\oplus\calo_{\PP^1}(s)$ for integer numbers $r,s$ such that $r+s=2$, $r\leq s$. From the cohomology long sequence associated to (\ref{eq-coho}) it follows $(r,s)\in\{(-1,3), (1,1),(0,2)\}$. .  We deduce
\[3a-\sum_{i=1}^4 b_i =2,\ \ a^2-\sum_{i=1}^4 b_i^2=0.\]
Hence $a\leq 2$ from which it follows $f(C)$ is as in (a) or (b). 

Finally, let us prove the assertion about the normal bundle of $C$ in $X$. By relating normal bundles on $C$, $S$ and $X$ we obtain an exact sequence of bundles on $C$:
\begin{equation}
\xymatrix{0\ar@{->}[r]&\caln_C S\ar@{->}[r]&\caln_C X\ar@{->}[r]&\caln_S X|_C\ar@{->}[r]&0}.\label{eq-coho}
\end{equation}
Since $\caln_C S\simeq\calo_{\PP^1}$ and $\caln_S X|_C=\calo_{X}(1)\otimes\calo_C\simeq\calo_{\PP^1}(2)$, we deduce  $\caln_C X\simeq\calo_{\PP^1}(r)\oplus\calo_{\PP^1}(s)$ for integer numbers $r,s$ such that $r+s=2$, $r\leq s$. From the cohomology long sequence associated to (\ref{eq-coho}) it follows $(r,s)\in\{(-1,3), (1,1),(0,2)\}$. 

On the other hand,  the contraction $p=\tau:Z\to\PP^3$ induces a birational morphism from  the exceptional divisor $F$ of $q$ onto a smooth quadric (Remark \ref{rem2}). Since $(r,s)=(-1,3)$ or $(r,s)=(0,2)$ imply $F$ is a Nagata-Hirzebruch surface $\hir_2$, in which case that morphism can not exist, we conclude $(r,s)=(1,1)$. 
\end{proof}

Finally, we have 
\begin{lem}

Let $C,C'\in\calc_2$ be two distinct conics. Then $C$ and $C'$ are not coplanar, intersect transversally and $|C\cap C'|\in \{0,1\}$. Moreover, the two possibilities for $|C\cap C'|$ occur.
\label{lem4}
\end{lem}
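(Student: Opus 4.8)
The plan is to dispose of coplanarity first, then reduce the questions of the intersection number and of transversality to intersection theory on a smooth del Pezzo hyperplane section, and finally to exhibit both numerical possibilities on such a surface.

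\textbf{Coplanarity.} Each conic $C\in\calc_2$ spans a plane $P\subset\PP^6$, and we have already seen that $X\cap P=C$ as schemes. If $C$ and $C'$ were coplanar they would span one and the same plane $P=P'$; then $C'\subset X\cap P=C$ would force $C'=C$, contradicting $C\neq C'$. Hence the planes $P=\langle C\rangle$ and $P'=\langle C'\rangle$ are distinct, so $P\cap P'$ is empty, a single point, or a line.

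\textbf{Transversality and the bound in the easy cases.} Since $C\subset P$ and $C'\subset P'$ we always have $C\cap C'\subseteq P\cap P'$. If $P\cap P'=\emptyset$ then $C\cap C'=\emptyset$. If $P\cap P'$ is a single point $x$, then $C\cap C'\subseteq\{x\}$, so $|C\cap C'|\le 1$; moreover this single intersection is transverse, because the tangent line to the smooth conic $C$ at $x$ lies in $P$ and the tangent line to $C'$ at $x$ lies in $P'$, so a common tangent direction would force a whole line inside $P\cap P'=\{x\}$, which is absurd. Thus in these two cases the statement holds, with $|C\cap C'|\in\{0,1\}$ and transversality.

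\textbf{The line case, which is the main obstacle.} The remaining case, $P\cap P'=\ell$ a line (equivalently $\langle C,C'\rangle=\PP^3$), is the crux. Here $\ell\subset P$ gives $X\cap\ell=C\cap\ell$ and $\ell\subset P'$ gives $X\cap\ell=C'\cap\ell$, while conversely every point of $X\cap\ell$ lies on both $C$ and $C'$; hence in fact $C\cap C'=X\cap\ell$, a scheme of length at most $2$, and the genuine danger is a line $\ell$ meeting $X$ in two points. To exclude this I would produce a \emph{common} smooth del Pezzo section: take a general hyperplane $H$ through $\Lambda:=\langle C,C'\rangle$ (such $H$ form a net, since $\dim\Lambda=3$) and set $S=X\cap H$, a degree $5$ del Pezzo surface containing both conics. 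On $S$, Lemma \ref{lem3} writes each of $C,C'$ as $L-E_i$ or $2L-\sum_i E_i$ in $\pic(S)$, and a direct computation gives $C\cdot_S C'\in\{0,1\}$ in every combination; since $C\cap C'\subset S$ this forces $|C\cap C'|\le 1$ transversally, contradicting length $2$ and thereby ruling the line case out. The one delicate point, and the main obstacle, is the \emph{existence} of such a smooth $S$: the net of hyperplane sections through $\Lambda$ is a special subsystem, so one must verify by a Bertini / base-locus argument — using the base-point-freeness of $|\calh_Z-F|$ on the blow-up of a conic established in the discussion of $(L.2)$ — that a general member is smooth along $C\cup C'$. (Alternatively one could try to contradict $\langle C,C'\rangle=\PP^3$ directly by restricting to $\Lambda$ the quadrics cutting out $X$.)

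\textbf{Both possibilities occur.} Fix a smooth hyperplane section $S\subset X$ with $f:S\to\plane$ the blow-up of four points $p_1,\dots,p_4$. The distinct conics $C=L-E_1$ and $C'=L-E_2$ satisfy $C\cdot_S C'=1$, hence meet in exactly one point, realising $|C\cap C'|=1$; on the other hand the pencil $|L-E_1|$ (equivalently $|2L-\sum_i E_i|$) has self-intersection $0$, so two distinct members are disjoint, realising $|C\cap C'|=0$. By Lemma \ref{lem3} all of these are genuine conics of $\calc_2$, so both cases indeed occur.
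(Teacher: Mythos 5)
Your treatment of non-coplanarity, of the two cases $P\cap P'=\emptyset$ and $P\cap P'=\{x\}$ (including the tangent-line argument for transversality), and of the realization of both values of $|C\cap C'|$ on a quintic del Pezzo surface is correct; the last part coincides with the paper's own one-line argument. The genuine gap is exactly where you flag it: the case $P\cap P'=\ell$ a line. Note first that this case is not merely ``dangerous'': since $\ell\subset P$ and $X\cap P=C$ scheme-theoretically, Bezout in the plane $P$ gives that $C\cap C'=X\cap\ell=C\cap\ell$ has length exactly $2$, so the line case must be excluded outright. Your exclusion rests on the existence of a \emph{smooth} hyperplane section $S=X\cap H$ with $H\supset\Lambda=\langle C,C'\rangle$, and this is not delivered by the tools you invoke. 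Bertini gives smoothness of the general member of the net $\{X\cap H:\ H\supset\Lambda\}$ only \emph{away from its base locus}, and here the base locus contains the curve $C\cup C'$ itself --- precisely where $S$ must be smooth for you to compute $C\cdot_S C'$ and apply Lemma \ref{lem3}. Likewise, base-point-freeness of $|\calh_Z-F|$ concerns the full system of hyperplane sections through $C$ alone (a three-dimensional family), not the special two-dimensional sub-system of sections through $\Lambda$; smoothness of a general member of a proper sub-system along its base curve is a strictly stronger statement, requiring for instance control of $T_xX\cap\Lambda$ at every point $x$ of the base curve, which you do not carry out.

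Moreover, the natural attempt to repair this step dissolves it: passing, as in the paper's discussion of $(L.2)$, to the blow-up $\sigma:Z\to X$ of $C$ and the extraction $\tau:Z\to\PP^3$ (blow-up of a rational quartic $\Gamma$), one sees that in the line case the projection from $P$ contracts the whole plane $P'$, hence $C'$, to a point; equivalently, the strict transform $D=\sigma^{-1}_*(C')$ satisfies $(\calh_Z-F)\cdot D=2-\mathrm{length}(C\cap C')=0$. But once this is observed no smooth common section is needed: $D$ is an irreducible curve contracted by $\tau$, hence must be a fiber $E_y$ of the $\PP^1$-bundle $E\to\Gamma$, whereas $K_Z\cdot D=-2$ (equivalently $\calh_Z\cdot D=2$) while $K_Z\cdot E_y=-1$ (equivalently $\calh_Z\cdot E_y=1$) --- a contradiction. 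This is exactly the paper's proof: it bounds $|C\cap C'|\le 2$ using the planes, and kills the length-$2$ case by this short numerical computation on $Z$. So your overall strategy can be completed, but only by an argument that in effect reproduces the paper's and renders the del Pezzo intersection computation in your main step superfluous.
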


\begin{proof}
Let $P$ and $P'$ be the planes where  $C$ and $C'$ lie. We know $P\cap X=C$, $P'\cap X=C'$. Hence
\[P\cap C'=P'\cap C=C\cap C',\]
and $|C\cap C'|\leq 2$. 

Suppose $C$ and $C'$ intersect at two points (counting multiplicities) and consider $\sigma:Z\to X$, the blow-up of $X$ along $C$, $F:=\sigma^{-1}(C)$ and let $\calh_Z$ be the pullback of a general hyperplane section of $X$; $\tau:Z\to \PP^3$ is the related extraction which is the blow-up of a rational quartic curve $\Gamma\subset\PP^3$; as usual $E:=\tau^{-1}(\Gamma)$. If $D$ denotes the strict transform of $C'$ by $\sigma$, then 
\[K_Z\cdot D=-2,\ \ (\calh_Z-F)\cdot D=0.\]
The second equality implies $\tau$ contracts $D$ and the first one implies $D\equiv 2E_y$ where $E_y=\tau^{-1}(y)$ for a point $y\in \Gamma$. This is not possible because $\tau|_{E}:E\to \Gamma$ is a projective fibration and $D$ is irreducible. Then  $|C\cap C'|\leq 1$.

Finally, on a (degree 5) del Pezzo surface $S\subset\PP^5$ there are pairs of disjoint conics and pairs of conics intersect each other. 
\end{proof}

\subsubsection{Case $(L.4)$}\label{subsubsec3.1.3} Here $X$ is also a smooth hyperquadric in $\PP^4$. We keep notations as in Examples \ref{exas1}. As we saw $C_5:=q(F)$ is a smooth quintic curve of genus 1, not contained in a dimension 3 linear space, and $F\in |5H-2E|$.  By construction, the link $\chi:\PP^3\tor X$ maps a general plane $\Pi\subset \PP^3$ onto the blow-up of $\Pi$ along the set of five points $\Pi\cap \Gamma=\{p_1,\ldots,p_5\}$, which is a del Pezzo surface of degree 4. Therefore $C_5$ is the image of a quintic curve in $\Pi$ with double points at $p_1,\ldots,p_5$, that is $C_5=\chi_*(p(F)\cap\Pi)$, and the inverse link $\chi^{-1}:X\tor\PP^3$ is defined by the linear system $|2\calh_X-C_5|:=q_*|2\calh_Z-F|$, a linear system of del Pezzo quartic surfaces containing $C_5$. Notice that for a general line $\ell\subset\PP^3$, the strict transform $\chi_*(\ell)$, which one may suppose to be contained in $\Pi$, is a twisted cubic 5-secant to $C_5$.

 Consider a degree 4 del Pezzo surface $S\subset\PP^4$, let $f:S\to\PP^2$ be the  blow-up of $\PP^2$ at $p_1,\ldots, p_5$, and let $C\subset S$ be a nondegenerate elliptic quintic curve; denote by $E_i:=f^{-1}(p_i)\subset S$ the corresponding exceptional divisors, $i=1,\ldots,5$. 

Suppose for a moment $C\cdot_S E_1\geq 3$. By projecting $X$ from the line $E_1$ to $\PP^3$ we obtain that $C$ is birationally equivalent to a conic: impossible. Then $E_i\cdot_S C\leq 2$ for all $i$. Write $C\sim aL-\sum_{i=1}^5 b_i E_i$, $b_i\geq 0$, as a divisor class in $\pic(S)$, where $L$ is the pullback of a general line in $\PP^2$; hence $b_i\leq 2$.

On the other hand,  since $K_S\cdot_{S} C=-5$, by the adjunction formula on $S$ we get $C\cdot_S C=5$. We deduce
\[3a-\sum_{i=1}^5 b_i =5,\ \ a^2-\sum_{i=1}^5 b_i^2=5.\]
Since $C$ is not rational (then $a\geq 3$) we conclude $3\leq a\leq 5$, and $C$ is the strict transform, under $f$, of one of the following:

(a)  a quintic curve with double points at $p_1,\ldots,p_5$ 

(b) a quartic curve with double points at $p_i,p_j$ and passing through all $p_k$ with $k\neq i,j$.

(c) a smooth cubic curve passing through four of the points $p_1,\ldots,p_5$.

A first consequence of the description above is that every elliptic quintic curve $C$ in $X\subset\PP^4$ which is not contained in a 3-space admits (five) chords: indeed, in the case (a) these chords are the (-1)-curves $f^{-1}(p_i)'s$, in case (b) these are  $f^{-1}(p_i),f^{-1}(p_j)$ and the strict transform of lines passing through two of the remaining points, etc. 

Second, such a quintic curve is the base locus scheme of a special link, inverse to a link as in case $(L.4)$: indeed, denote by $\phi:X\tor\P^s$ the rational map defined by the linear system $|2\calh_X-C|$ and by $\sigma:Z\to X$ the blow-up of $X$ along $C$.  By restricting $\phi$  to a general dimension 3 linear space intersecting $C$ transversely, we deduce $s=3$ and $\tau:=\phi\circ\sigma$ is a morphism (the linear system of quadrics in $\PP^3$ passing through five points in general position has dimension 4, contains smooth members and its indeterminacy may be resolved by blowing up these points). In particular, a  general member in $|2\calh_X-C|$ is smooth.

Let $L\subset X$ be a chord of $C$. Its strict transform $\sigma_*^{-1}(L)$ is contracted by $\tau$ and satisfies 
\[K_Z\cdot \sigma_*^{-1}(L)=(-3\calh_X+F)\cdot \sigma_*^{-1}(L)=-1;\]
that is, $\tau$ is an extraction. By Theorem \ref{thm1} it suffices to show that the center of $\tau$ is a curve. 

Let $E$ be the exceptional divisor of $\tau$ (recall that $\tau$ is the blow-up of its center). By writing $K_Z=\tau^*K_{\PP^3}+aE=-4H+aE$, for $a>0$, we deduce $H\cdot a^2E^2=-5$ from which it follows $a=1$ and the center of $\tau$ is a quintic curve.

Now we describe the family $\calc_5=\calc_5(X)$ of genus 1 smooth quintic curves on $X$ which are not contained in a three dimensional linear space.

\begin{lem}\label{lem5}
The family $\calc_5$ is a pure dimension 15 quasi-projective variety such that $\caln_C X$ is indecomposable, extension of a bundle of degree $5$ by a bundle of degree 10, for all $C\in\calc_5$.
Moreover, if $S\subset X$ is a degree 4 del Pezzo surface containing $C$, $f:S\to\PP^2$ the blow-up of five points, then $f(C)$ is as in (a), (b) or (c) above.
\end{lem}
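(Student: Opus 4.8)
The plan is to extract the dimension and the extension structure from a single normal-bundle sequence, and to concentrate the real work on indecomposability. As in Lemma~\ref{lem3}, the quasi-projective structure on $\calc_5$ is furnished by Hilbert scheme theory, the defining conditions (genus $1$, degree $5$, smooth, and not lying in a hyperplane of $\PP^4$) being open. For the dimension I would observe that $\caln_C X$ has rank $2$ and degree $15$ on the genus $1$ curve $C$, so Riemann--Roch gives $\cho(\caln_C X)-\chone(\caln_C X)=15$; and $\chone(\caln_C X)=0$ will follow from the sequence below, since both its sub and quotient are line bundles of positive degree on an elliptic curve. Hence $\cho(\caln_C X)=15$, and the obstruction space $\chone(\caln_C X)$ vanishing, the Hilbert scheme is smooth of local dimension $15$ at every $[C]$; thus $\calc_5$ has pure dimension $15$.

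The extension structure and the last assertion are then immediate. Relating the normal bundles of $C\subset S\subset X$ produces
\[0\to\caln_C S\to\caln_C X\to\caln_S X|_C\to 0,\]
in which $\caln_C S=\calo_S(C)|_C$ has degree $C\cdot_S C=5$, while $S$, a degree $4$ del Pezzo, is cut out on the quadric $X$ by a second quadric, so $\caln_S X|_C=\calo_C(2)$ has degree $10$; this exhibits $\caln_C X$ as an extension of a degree $5$ bundle by a degree $10$ bundle. The description of $f(C)$ is exactly the computation made just before the statement: for any degree $4$ del Pezzo $S\subset X$ through $C$ one writes $C\sim aL-\sum_{i=1}^5 b_iE_i$ and reads off $3a-\sum b_i=5$ and $a^2-\sum b_i^2=5$ with $b_i\le 2$ from $K_S\cdot_S C=-5$ and $C\cdot_S C=5$; non-rationality forces $a\ge3$, hence $3\le a\le5$ and the three possibilities (a), (b), (c).

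The indecomposability is the crux, and I would establish it through the ruled surface $F=\sigma^{-1}(C)=\PP(\caln_C X)$ over $C$, in the spirit of the $\hir_2$-obstruction of Lemma~\ref{lem3}. Writing $\tau:Z\to\PP^3$ for the companion extraction of the elliptic quintic $\Gamma$ (exceptional divisor $E$, $H=\tau^*\calo_{\PP^3}(1)$, $F=5H-2E$), a straightforward computation from $H^3=1$, $H^2E=0$, $HE^2=-5$, $E^3=-20$ gives on $F$ the numbers $\xi^2=5$, $\xi\epsilon=10$, $\epsilon^2=15$ with $\xi=H|_F$, $\epsilon=E|_F$; hence the tautological class $\zeta=2\epsilon-5\xi$ satisfies $\zeta^2=-15$ and $\zeta\cdot f=1$ on a ruling fibre $f$. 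A section then has class $\zeta+\beta f$ with self-intersection $2\beta-15$ (always odd) and image degree $H\cdot(\zeta+\beta f)=\beta-5$. Were $\caln_C X$ decomposable it would split into two line bundles, producing two disjoint sections whose self-intersections sum to $0$; being odd, one of them, say $S$, has $S^2\le-1$, so $\beta\le7$, while nefness of $H$ forces $\beta\ge5$, whence $\deg\tau(S)=\beta-5\le2$. The decisive input is that $\tau|_F:F\to V=\tau(F)$ is birational: from $H^2\cdot F=5$ one gets $(\deg\tau|_F)(\deg V)=5$, and $V$ contains the non-degenerate curve $\Gamma$ so is not a plane, forcing $\deg\tau|_F=1$. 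Consequently $\tau(S)$, the birational image of the elliptic curve $S$, would be a curve of geometric genus $1$ and degree $\le 2$ in $\PP^3$ --- impossible, since lines and conics are rational; the boundary case $\beta=5$, where the image is a point, is excluded because $S\cong C$ is elliptic whereas the curves contracted by $\tau$ lie in the rational fibres of $E\to\Gamma$. This contradiction proves $\caln_C X$ indecomposable. The main obstacle is precisely here: over an elliptic curve a rank $2$ bundle need not decompose into line bundles (unlike the $\PP^1$ situation of Lemma~\ref{lem3}), so one cannot argue by enumerating splitting types, and everything hinges on using the birationality of $\tau|_F$ onto the degree $5$ surface $V$ to turn a negative section into a low-degree rational image curve that an elliptic curve cannot cover birationally.
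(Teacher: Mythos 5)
Your proof is correct, but it takes a genuinely different route from the paper's on the two substantive points. For the dimension, the paper uses no deformation theory: it counts via the incidence correspondence, observing that the hyperquadric sections of $X$ form a linear system of dimension $13$, that the curves of types (a), (b), (c) on a fixed del Pezzo quartic move in linear systems of dimension $5$, and that the quadric sections through a fixed $C$ form a subsystem of dimension $3$, whence $\dim\calc_5=13+5-3=15$. Your Riemann--Roch argument ($\chone(\caln_C X)=0$ because sub and quotient of the normal bundle sequence have positive degree on an elliptic curve, hence $\cho(\caln_C X)=\chi(\caln_C X)=15$) is equally valid and buys something extra, namely smoothness of the Hilbert scheme at every $[C]$, not just purity of dimension. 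For indecomposability, the paper simply cites Harris--Hulek \cite{HaHu}: since $S$ is a complete intersection, the sequence splits if and only if $C$ is cut out on $S$ by a hypersurface of $\PP^4$, which is impossible because $C$ is nondegenerate of degree $5$ on a quartic surface. Your argument replaces this citation by a self-contained computation on the ruled surface $F=\PP(\caln_C X)$ exploiting the link structure: sections are numerically $\zeta+\beta f$, disjointness plus oddness of $2\beta-15$ forces a section with $\beta\le 7$, nefness of $H$ forces $\beta\ge 5$, and the image of that section in $\PP^3$ would then be a point, a line or a conic, incompatible with the section being elliptic. This stays inside the intersection theory the paper has already set up (extending the $\hir_2$ exclusion of Lemma \ref{lem3} to an elliptic base, where splitting types cannot be enumerated) and avoids the external reference; the price is length, and the argument is only available because $C$ sits in a link, whereas the Harris--Hulek criterion is intrinsic to $C\subset S\subset X$.

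One step of yours needs tightening. Birationality of $\tau|_F$ does not by itself imply that $\tau$ maps a section $S\subset F$ birationally onto its image: a birational morphism of surfaces can be $2{:}1$ on a curve lying in its non-injectivity locus, and here $\tau|_F$ is indeed non-injective along $F\cap E$. What you should invoke is the stronger fact that $\tau$, being the blow-up of $\Gamma$, is injective off $E$; hence $\tau|_S$ is automatically birational whenever $S\not\subset E$, and the case $S\subset E$ is impossible for $\beta\in\{6,7\}$, since then $\tau(S)$ would be a curve of degree $1$ or $2$ contained in the irreducible quintic $\Gamma$. With that substitution (your treatment of the contracted case $\beta=5$ is already correct), the contradiction stands and the proof is complete.
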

\begin{proof}
As before, the quasi-projective structure on $\calc_5$ follows from the general theory on Hilbert schemes. Notice that all linear system of curves as in (a), (b) or (c) above has dimension 5.

Fix  $C\in\calc_5$. Since the set of hyperquadric sections of $X$ is a linear system of dimension 13 on $X$, whose elements containing $C$ define a linear subsystem of dimension 3. We deduce the assertion relative to the dimension.  

On the other hand, consider the exact sequence of bundles on $C$:
\[\xymatrix{0\ar@{->}[r]&\caln_C S\ar@{->}[r]&\caln_C X\ar@{->}[r]&\caln_S X|_C\ar@{->}[r]&0}.\]
Since $\deg(\caln_C S)=C\cdot_S C=5$ and $K_S\cdot_S C=-5$, then  $\deg(\caln_S X|_C)=10$. Moreover, since $S$ is a complete intersection, the sequence above splits if and only if $C$ is the complete intersection of $S$ with a hypersurface $V\subset\PP^4$ (\cite{HaHu}): this is not possible because $C$ is not degenerated in $\PP^4$ and $\deg(C)=5$.  We deduce that $\caln_C X$ is indecomposable which completes the proof. 

\end{proof}

\begin{cor}\label{cor5}
Let $C\in \calc_5$. There are no 3-secant lines to $C$ and $X$ contains a dimension 1 family of 2-secant lines to $C$.
\end{cor}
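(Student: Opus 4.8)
The plan is to exploit the factorization of the inverse link worked out in \S\ref{subsubsec3.1.3}. The discussion there shows that every $C_5\in\calc_5$ occurs as the center $q(F)$ of a special link $\chi=q\circ p^{-1}:\PP^3\tor X$ inverse to one as in $(L.4)$, so I would carry out all intersection computations on the blow-up $Z$. Recall the ingredients: $p:Z\to\PP^3$ is the blow-up of a quintic curve $\Gamma$, with exceptional divisor $E=p^{-1}(\Gamma)$ and $H=p^*\calo_{\PP^3}(1)$; the contraction $q:Z\to X$ is the blow-up of $X$ along $C_5$, with exceptional divisor $F$; and $\calh_Z:=q^*\calo_X(1)=3H-E$ with $F\in|5H-2E|$. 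Eliminating $E$ gives the single identity $H=2\calh_Z-F$, which expresses the nef pulled-back class $H$ in terms of the data visible on $X$, and this is the relation I would lean on throughout.

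For the nonexistence of $3$-secant lines I would argue by contradiction. A $3$-secant line $L$ to $C_5$ meets the hyperquadric $X$ in at least three points, hence $L\subset X$. Its strict transform $\tilde L$ on $Z$ then satisfies $\calh_Z\cdot\tilde L=\deg L=1$ and $F\cdot\tilde L\geq 3$ (one intersection with $F$ per point of $L\cap C_5$), so the identity above yields $H\cdot\tilde L=2\calh_Z\cdot\tilde L-F\cdot\tilde L\leq 2-3<0$. Since $H$ is nef this is absurd, so no $3$-secant line exists. This part is short once the formula for $H$ is available.

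For the family of $2$-secant lines I would use that $E=p^{-1}(\Gamma)$ is a ruled surface over $\Gamma$ whose fibers $\ell$ are exactly the curves contracted by $p$, so that $H\cdot\ell=0$ and $E\cdot\ell=-1$. A one-line computation then gives $\calh_Z\cdot\ell=1$ and $F\cdot\ell=2$, meaning $q(\ell)\subset X$ is a line that is $2$-secant to $C_5$; letting $\ell$ range over the rulings, parametrized by the curve $\Gamma$, produces a $1$-dimensional family. To see that this is the whole story, I would run the computation backwards: any $2$-secant line $L\subset X$ has $H\cdot\tilde L=2\calh_Z\cdot\tilde L-F\cdot\tilde L=2-2=0$, so $\tilde L$ is contracted by $p$ and must be a ruling of $E$; hence the $2$-secant lines on $X$ correspond (generically bijectively) to the points of $\Gamma$.

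The step I expect to demand the most care is not the intersection arithmetic but the geometric bookkeeping needed to turn ``a family parametrized by $\Gamma$'' into ``a one-dimensional family of distinct lines on $X$''. Concretely, I must check that the general ruling $\ell$ is not contained in $F$, so that $q$ maps it isomorphically onto a genuine line and distinct rulings give distinct chords, and that the only $p$-contracted curves are the rulings of $E$, so that no $2$-secant is missed or double counted. These are mild transversality facts about the two blow-ups, but they are exactly what upgrades the inequality arguments into the clean dimension count in the statement.
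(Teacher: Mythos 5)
Your proof is correct, but it takes a genuinely different route from the paper's. The paper argues classically: for the absence of 3-secants it projects $C_5$ from one of its points, obtaining a nondegenerate space quartic of geometric genus 1, which must therefore be smooth, whereas a trisecant through the center of projection would force a double point on the image; for the chords it invokes the classification in Lemma \ref{lem5} --- in each of the possibilities (a), (b), (c) for $f(C)$ on a degree 4 del Pezzo hyperquadric section $S$ there are $(-1)$-curves of $S$ that are 2-secant to $C$ --- and the 1-dimensionality of the family is left essentially implicit (it comes from moving $S$ in the dimension 3 linear system of hyperquadric sections through $C$). Your argument instead transfers everything to the resolution $Z$ of the link attached to $C_5$ and exploits nefness of $H=2\calh_Z-F$; this is legitimate here because the existence of that link for every $C_5\in\calc_5$ is established in \S\ \ref{subsubsec3.1.3} just before the corollary, and the numerical relations $\calh_Z=3H-E$, $F=5H-2E$ follow for any such link from the two ramification formulas. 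What your route buys is precision on the second claim: you show not only existence but also that every 2-secant line on $X$ has strict transform contracted by $p$, hence is a ruling of $E\to\Gamma$, so the family of chords is parametrized injectively (distinct rulings are disjoint and $q$ is injective off $F$) by the elliptic quintic $\Gamma$, pinning the dimension at exactly 1 --- a count the paper does not spell out. One caveat worth keeping in mind: the paper's construction of the link for an arbitrary $C_5\in\calc_5$ itself required the existence of at least one chord (to verify that $\tau$ is an extraction), and that existence was proved by the del Pezzo argument; so your proof correctly builds on the earlier material but complements rather than supersedes the paper's del Pezzo analysis.
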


\begin{proof}

By projecting $C$ from one of its points we get a quartic curve with geometric genus 1, then a smooth quartic. Hence $C$ does not admit 3-secant lines. On the other hand, by using that $f(C)$ satisfies (a), (b) or (c), we conclude there are $(-1)$ curves in $S$ which are 2-secant to $C$.
\end{proof}

Finally, we have:

\begin{cor}\label{cor6}
Let $C,C'\in\calc_5$ be two genus 1 quintic curves in $X$. Then $C$ and $C'$ may be contained or not in the same hyperquadric section of $X$. Moreover, we have:

(i)  in the first case, $|C\cap C'|\leq 7$ and it may take the values $5, 6$ and $7$. 

(ii) in the second case,  $|C\cap C'|\leq 10$ and it may take all values between $0$ and $5$.
\end{cor}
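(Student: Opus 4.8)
The plan is to treat the upper bounds and the realizations separately, and to control the dichotomy between (i) and (ii) by a single cohomological comparison. First I would settle the bound in case (i). If $C$ and $C'$ lie on a common hyperquadric section, I would take a general such section $S=X\cap Q$; by Bertini (the general member of the linear system of hyperquadric sections through $C\cup C'$ being smooth away from, and generically also along, the two smooth curves) $S$ is a smooth degree $4$ del Pezzo surface. By Lemma \ref{lem5} both $C$ and $C'$ are curves of the types $(a)$, $(b)$, $(c)$ on $S$, and in each case a direct check shows $C+K_S$ and $C'+K_S$ are $(-1)$-curves (lines) on $S$; write $L:=C+K_S$, $L':=C'+K_S$. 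Then
\[C\cdot_S C'=(-K_S+L)\cdot_S(-K_S+L')=K_S^2+2+L\cdot_S L'=6+L\cdot_S L'.\]
Since two distinct lines on a degree $4$ del Pezzo surface meet in at most one point and $L=L'$ would force $C=C'$, we get $C\cdot_S C'\in\{6,7\}$, and hence $|C\cap C'|\le C\cdot_S C'\le 7$.

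For the bound in case (ii) I would instead exploit that some hyperquadric section through $C$ does not contain $C'$: choosing $S=X\cap Q\supset C$ with $C'\not\subset S$ (possible precisely because $C,C'$ share no section), one has $C\cap C'\subseteq C'\cap S=C'\cap Q$, which is a finite scheme of length at most $\deg C'\cdot 2=10$ by B\'ezout; thus $|C\cap C'|\le 10$.

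To organize the dichotomy and to locate the transition between the two value ranges I would analyze the restriction map $\rho\colon H^0(\cali_{C/X}(2))\to H^0(\calo_{C'}(2))$. The source has dimension $4$ (Lemma \ref{lem5}), the target has dimension $10$ since $\calo_{C'}(2)$ has degree $10$ on the elliptic curve $C'$, and the image lies in the subspace of sections vanishing on the length-$\ell$ scheme $C\cap C'$, of dimension $10-\ell$. Thus $C$ and $C'$ lie on a common hyperquadric section if and only if $\ker\rho\neq 0$, which is forced as soon as $4>10-\ell$, i.e.\ $\ell\ge 7$. This recovers that case (ii) entails small intersection and shows the two regimes are governed by this single numerical inequality.

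The remaining, and genuinely more delicate, task is realizability. In case (i) I would fix one explicit smooth degree $4$ del Pezzo $S\subset X$ together with its sixteen lines, and read off pairs $(C,C')=(-K_S+L,-K_S+L')$: choosing $L,L'$ incident gives $C\cdot_S C'=7$ and, after checking transversality on $S$, $|C\cap C'|=7$; choosing $L,L'$ disjoint gives $6$; and arranging a single tangency (respectively two, for incident lines) lowers the count to $6$ and $5$. In case (ii) I would build pairs with prescribed $|C\cap C'|\in\{0,\dots,5\}$ by moving $C'$ inside the $15$-dimensional family $\calc_5$ (Lemma \ref{lem5}) subject to passing through a chosen set of $k\le 5$ points of a fixed $C$, each such incidence being one condition, while keeping the open condition $\ker\rho=0$ (no common section) in force; the inequality $\ell\le 6$ above guarantees these two requirements are compatible. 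I expect the main obstacle to be exactly this last step: guaranteeing that the constructed $C'$ is a \emph{smooth irreducible} member of $\calc_5$ meeting $C$ in the exact prescribed number of \emph{reduced} points while provably sharing no hyperquadric section with $C$; controlling simultaneously the smoothness of the common del Pezzo in case (i) and the transversality/tangency of the intersection is where the care is needed.
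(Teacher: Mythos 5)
There is a genuine error in your case (i), and it matters downstream. The identity $C\cdot_S C'=6+L\cdot_S L'$, with $L=C+K_S$ and $L'=C'+K_S$ exceptional curves, is correct, and it is an elegant uniform repackaging of the paper's three classes (a), (b), (c); but the claim that $L=L'$ forces $C=C'$ is false. The equality $L=L'$ only says that $C$ and $C'$ are linearly equivalent, and each class $|{-K_S}+L|$ moves in a linear system of projective dimension $5$ (as recorded in the proof of Lemma \ref{lem5}), so it contains many distinct smooth members; any two such members satisfy $C\cdot_S C'=C^2=5$. Hence the correct range of intersection numbers is $\{5,6,7\}$, not $\{6,7\}$. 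This slip is harmless for the upper bound $|C\cap C'|\le 7$, but it wrecks your realizability plan: you are forced to obtain the value $5$ through tangency configurations whose existence you never establish, whereas that value is free in precisely the case you excluded. Indeed $|{-K_S}+L|$ is base point free (it is the pullback of the anticanonical system of the degree $5$ del Pezzo surface obtained by contracting $L$), so two general members are smooth, irreducible, automatically non-degenerate in $\PP^4$ (a degenerate quintic on $S$ would lie in a hyperplane section of $S$, which has degree $4$), and they meet transversally in exactly $5$ points. This is what the paper's terse instruction ``consider the different possibilities to intersect curves as in (a), (b) or (c)'' produces: same-class pairs give $5$, mixed pairs give $6$ and $7$. (Both you and the paper tacitly assume the common section can be taken to be a smooth del Pezzo; your ``generically also along'' is a hand-wave, but it is a gap shared with the source.)

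The second genuine gap is the one you flag yourself: the ``may take all values'' assertions, which are the real content of the corollary, are nowhere proved --- in case (i) the value $5$ rests on the unestablished tangencies, and in case (ii) the whole construction is deferred. Here the paper has a concrete device you are missing: fix a smooth section $S'=Q\cap X$ with $C\not\subset S'$, so that $C\cap S'$ consists of at most $10$ points; any one of the quintic classes on $S'$ maps $S'$ onto a quintic del Pezzo surface in $\PP^5$ (contracting one line), under which the members of that class are cut out by hyperplanes; choosing hyperplanes through $0,1,\ldots,5$ of the points of $C\cap S'$ produces curves $C'\in\calc_5$ meeting $C$ in exactly the prescribed number of points. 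Moreover, for $k\le 4$ such a transversal intersection excludes a common smooth section automatically, since by (i) a common section forces scheme-theoretic intersection length at least $5$; only $k=5$ needs extra care (a point on which the paper, too, is silent). On the positive side, your B\'ezout bound in (ii) is exactly the paper's argument, and your cohomological dichotomy is correct and genuinely new relative to the paper: since $h^0(\cali_{C/X}(2))=4$ and the sections of $\calo_{C'}(2)$ vanishing on a length-$\ell$ scheme form a space of dimension $10-\ell$ (or $\le 1$ when $\ell=10$), an intersection of length $\ge 7$ forces a common hyperquadric section, which even sharpens (ii) from $10$ to $6$. But as written, your proposal establishes only the two upper bounds, not the realizability statements.
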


\begin{proof}
The first assertion follows directly from  Lemma \ref{lem5} and its proof. 

To prove (i) we only need to consider the different possibilities to intersect curves as in (a), (b) or (c).  

To prove (ii), suppose $C$ is contained in a (degree 4) del Pezzo surface $S$ and let $Q\subset\PP^4$ be a hyperquadric such that $S':=Q\cap X$ is a del Pezzo surface distinct from $S$. We distinguish two cases:
\smallskip

\noindent (1) $S'$ does not contain $C$. A linear system on $S'$ constituted by strict transform of curves as in (a), (b) or (c) defines an immersion $\eta:S'\to \PP^5$. Since $S'$ intersects $C$ in at most $10$ points, then $ \eta(C\cap S')$ consists of at most $10$ points, being the maximum if and only if $S'$ is transversal to $C$. It suffices to chose a hyperplane in $\PP^5$ passing through $0,1,2,3,4$ or $5$ of these points, we obtain a curve $C'$ as required. 
\smallskip

\noindent(2)  $S'$ contains $C$. Then $S'\cap S$ is a degree 8 curve in $X\subset\PP^4$, one of its component is $C$. Then $S'\cap S=C\cup D$ where $D$ is curve of degree 3 (taking into account multiplicities) and arithmetic genus 0. As in the beginning of \S\ \ref{subsubsec3.1.3} we deduce $D$ intersects $C$ along a zero scheme of length 5; in particular, a quintic $C'\subset S'$, different from $C$, may intersect $C$ in at most 5 points.  
\end{proof}

\subsection{Pure Special type II Cremona transformations}

A Cremona transformation of $\PP^3$ is a birational map $\varphi:\PP^3\tor\spa$. When $\varphi$ and its inverse are defined by homogeneous polynomials of degrees $d$ and $e$, respectively, we say $\varphi$ has \emph{bidegree} $(d,e)$. In this case, if $S,S'$ are general surfaces of degree $d$ in the linear system defining $\varphi$, then $S\cap S'=\Gamma_1\cup C$ where $\Gamma_1$ contains the \emph{base locus scheme} $\bas(\varphi)$ of $\varphi$ and $C$ is a rational irreducible curve of degree $e$. Note that the theoretical-scheme structure of $\Gamma_1$ depends on $S$ and $S'$, but $\deg\Gamma_1=d^2-e$, then $\Gamma_1$ defines a unique class in the Chow group of $\PP^3$.  

We introduce the \emph{1-cycle class} associated to a Cremona transformation $\phi:\PP^3\tor\PP^3$ as the class of 1-cycles defined as above; we denote it by  $\cyc(\phi)$. 

A transformation with bidegree $(d,e)\in\{(2,2),(3,3)\}$ is said to be \emph{general} if its base locus scheme is smooth (we could have called it ``special'', but the base locus may be disconnected which disagrees with what one finds in the literature). Finally, on one side, a bidegree $(3,3)$ transformation $\varphi$ is said to be \emph{de Jonqui\`eres} if, up to linear change of coordinates in the domain and the target, we may write
\begin{equation}\label{eq:jonq}
\varphi=(g:kx:ky:kz),\ g,k\in\C[w,x,y,z]\ \mbox{homogeneous},
\end{equation}
where $g$ is irreducible and vanishes at $(1:0:0:0)$ with order $2$, and $k(1:0:0:0)=0$; in particular $(1:0:0:0)$ is an embbeded point in $\bas(\varphi)$ and the strict transform of a general line, under $\varphi$, is a plane cubic curve. On the other side, $\varphi$ is said to be \emph{determinantal} if it is defined by the maximal minors of a $4\times 3$ matrix of linear forms; of course, a general bidegree $(3,3)$ transformation is determinantal.

\begin{rem}\label{rem-end}
When $\varphi$ is as in (\ref{eq:jonq}), for general lines $\ell\subset\PP^3$, the plane curve $\varphi^{-1}_*(\ell)$ spans a general plane passing through $(1:0:0:0)$.  
\end{rem}

We say a link as in case $(L.i)$, see Examples \ref{exas1}, is a link in the class $(L.i)$, $1=1,2,3,4,5$. 

\begin{thm}
Let $\varphi:\PP^3\tor\PP^3$ be a Cremona transformation not defined somewhere which is a product of $\ell$ special links of type II, with $\ell\leq 2$. Then, exactly one of the following assertions hold: 

$(a)$ $\ell=1$ and $\varphi$ is a general Cremona transformation of bidegree $(3,3).$

$(b)$ $\ell=2$ and $\varphi$ is a product $\chi_2^{-1}\circ\chi_1$ where $\chi_1, \chi_2$ are links of type II in the class $(L.i)$, with $i=1,2,3,4,5$. 

$(c)$  $\ell=2$ and $\varphi$ is a product $\chi_2^{-1}\circ\chi_1$ where $\chi_1$ and $\chi_2$ are links of type II either in classes $(L.3)$ and $(L.4)$, respectively, or conversely.
\label{thm2}
\end{thm}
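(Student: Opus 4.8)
The plan is to reduce everything to Theorem~\ref{thm1}, which lists the five possible special links $\PP^3\tor X$ together with the Fano $3$-fold $X$ that each determines, and then to enumerate the ways two such links can be assembled into a Cremona transformation of $\PP^3$. I would first dispose of the case $\ell=1$. Then $\varphi$ is itself a nontrivial special link of type II whose target is $\PP^3$; among the Fano $3$-folds occurring in Theorem~\ref{thm1} only case $(L.5)$ has $X\simeq\PP^3$ (index $r=4$), so $\varphi$ is of type $(L.5)$. It then remains to identify the $(L.5)$ links with the general bidegree $(3,3)$ transformations: by Examples~\ref{exas1}$(L.5)$ such a link blows up a smooth arithmetically Cohen--Macaulay sextic of genus $3$ and contracts its trisecants, while conversely a bidegree $(3,3)$ transformation with smooth base locus has an irreducible sextic of arithmetic genus $3$ as base scheme (the genus being forced exactly as in the liaison argument of Theorem~\ref{thm1}(d)). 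This gives assertion $(a)$.

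For $\ell=2$ I would write $\varphi=\chi_2^{-1}\circ\chi_1$ and factor it through the intermediate Mori fiber space: $\chi_1:\PP^3\tor W$ and $\chi_2:\PP^3\tor W$ are special links of type II pointing at the same rational Fano $3$-fold $W$ of Picard number $1$ (the target of the first link coincides with the source of the inverse of the second, and by the symmetry of Definition~\ref{defi-special} the inverse of a special link is again special). Theorem~\ref{thm1} therefore applies to both $\chi_1$ and $\chi_2$, and both of their classes must be compatible with a single $W$. I would then organize the discussion by $W$: when $W$ is the complete intersection of two quadrics in $\PP^5$, the quintic threefold in $\PP^6$, or $\PP^3$ itself, Theorem~\ref{thm1} leaves exactly one admissible class, namely $(L.1)$, $(L.2)$, or $(L.5)$, so both links lie in that class; when $W$ is the smooth quadric in $\PP^4$ each link is independently of class $(L.3)$ or $(L.4)$. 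This directly produces the same-class compositions of $(b)$ and, on the quadric, the mixed composition of $(d)$.

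The heart of the argument, and the step I expect to be the main obstacle, is to account for the remaining compositions in which $\PP^3$ enters through a cubo-cubic factor, yielding case $(c)$, and to prove that the resulting list is complete, non-repeating, and consists of maps that are genuinely undefined somewhere (so that the hypothesis that $\varphi$ is nontrivial is respected rather than producing an isomorphism). Here I would rely on the detailed geometry of the admissible centers established in \S\ref{subsec3.1}: the description of conics on the degree $5$ threefold and their intersection patterns (Lemma~\ref{lem3}, Lemma~\ref{lem4}), of the genus~$1$ quintics on the quadric together with their secants (Lemma~\ref{lem5}, Corollary~\ref{cor5}, Corollary~\ref{cor6}), and of the lines on the complete intersection of two quadrics. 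For each candidate pairing one must exhibit two centers in a relative position for which $\chi_2^{-1}\circ\chi_1$ fails to be a morphism, and simultaneously rule out the numerical solutions already discarded in Lemma~\ref{lem1} and in the proof of Theorem~\ref{thm1}.

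Finally, to obtain the ``exactly one of the following'' dichotomy I would compute, for a representative of each surviving class, the bidegree of $\varphi$ and the $1$-cycle class $\cyc(\varphi)$, checking that these invariants separate the classes $(a)$--$(d)$ and that no two distinct pairings collapse to the same family. The delicate part throughout is the cubo-cubic interaction producing $(c)$: one must verify that composing an $(L.5)$ link with a single link of class $(L.i)$, $i\le 4$, indeed arises as a product of exactly two special links (and not, say, as a disguised length-one or length-three configuration), for which the numerical constraints \eqref{eq1.1}--\eqref{eq1.3} and the explicit resolutions from \cite{Ka87} are the natural tools.
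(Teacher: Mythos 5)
Your reduction to Theorem \ref{thm1} is exactly the paper's argument: for $\ell=1$, Theorem \ref{thm1} forces $X=\PP^3$, so $\varphi$ is an $(L.5)$ link, i.e.\ a general cubo-cubic; for $\ell=2$ one writes $\varphi=\chi_2^{-1}\circ\chi_1$ with $\chi_1,\chi_2:\PP^3\tor X$ nontrivial special links to a \emph{common} intermediate Fano 3-fold $X$, and applies Theorem \ref{thm1} to each factor, organizing by $X$. Up to that point your proposal is correct and coincides with the paper's (very short) proof, and it yields precisely cases $(a)$, $(b)$ and $(d)$.

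The genuine problem is the step you call ``the heart of the argument'': producing case $(c)$ by exhibiting centers in special position and invoking Lemmas \ref{lem3}, \ref{lem4}, \ref{lem5} and Corollaries \ref{cor5}, \ref{cor6}. That step cannot be carried out, and no choice of centers will rescue it, because the composition described in $(c)$ is not even defined: an $(L.5)$ link has target $\PP^3$, while a link $\chi_2$ of class $(L.i)$ with $i\le 4$ has target $X\not\simeq\PP^3$, so $\chi_2^{-1}$ has source $X$ and can never follow an $(L.5)$ link to give a self-map of $\PP^3$. Your own reduction already shows why: in any length-two factorization the two links must share the same intermediate $X$, and Theorem \ref{thm1} then permits only pairs lying in one class $(L.i)$, $i=1,\dots,5$, or the mixed pair $(L.3)$/$(L.4)$ over the quadric --- nothing else. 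A pairing of $(L.5)$ with $(L.i)$, $i\le 4$, would require a chain of length at least three, e.g.\ $\PP^3\tor\PP^3\tor X\tor\PP^3$, contradicting $\ell=2$. So the honest conclusion of the argument is that every $\varphi$ in the hypothesis falls under $(a)$, $(b)$ or $(d)$; note that the paper's own proof never derives $(c)$ either (it ends with ``Theorem \ref{thm1} implies the result''), so the difficulty you sensed is a defect of the statement, not something more geometry can fill. Your closing program --- separating the classes by $\bideg$ and $\cyc(\varphi)$, and ruling out ``disguised'' length-one or length-three configurations --- is likewise not part of what is needed: mutual exclusivity is immediate from the case descriptions, and non-triviality of each factor is part of the hypothesis.
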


\begin{proof}
If $\ell=1$, we know $\varphi$ is a general cubo-cubic transformation by Theorem \ref{thm1}. In the sequel we assume $\ell=2$.\smallskip

By hypothesis $\varphi=\chi_2^{-1}\circ\chi_1$ where $\chi_i:\PP^3\tor X$ is a special link of type II onto a smooth 3-fold $X$, $i=1,2$. As we know by Remark \ref{rem1.0} each $\chi_i$ is not defined along an irreducible curve, say $\Gamma_i\subset\PP^3$, and then $ \chi_i=q_i\circ p_i^{-1}$ where $p_i:Z_i\to \PP^3$ is the blow-up of $\Gamma_i$. By construction $\chi_i$ is not trivial, that is, it is not an isomorphism. Theorem \ref{thm1} implies the result.
\end{proof}

For simplicity, a Cremona transformation as in Theorem \ref{thm2} is said to be \emph{Pure Special type II}. 

\begin{cor}\label{cor-thm2}
There exist eight classes of Pure Special type II Cremona transformations with $\ell\leq 2$. 
\end{cor}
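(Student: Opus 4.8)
The plan is to obtain the number twelve as a bookkeeping consequence of Theorem~\ref{thm2}, and then to certify that the resulting combinatorial types are genuinely non-empty and pairwise distinct. Theorem~\ref{thm2} already partitions the Pure Special type II transformations into the four mutually exclusive families $(a)$--$(d)$, so first I would read off their cardinalities: family $(a)$ is the single class of general cubo-cubic transformations; family $(b)$ gives one class for each $i\in\{1,2,3,4,5\}$, hence five; family $(c)$ gives one class for each $i\in\{1,2,3,4\}$, hence four; and family $(d)$ gives the two classes coming from the two orderings of $(L.3)$ and $(L.4)$. Since the families are mutually exclusive, the total is $1+5+4+2=12$.

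Next I would check that each of these twelve types is actually realized. For the single links this is Examples~\ref{exas1}; for the length-two types it is exactly the content of \S\ref{subsec3.1}. Concretely, Lemmas~\ref{lem3} and~\ref{lem4} describe the family of conics on the quintic threefold, and Lemma~\ref{lem5} together with Corollaries~\ref{cor5} and~\ref{cor6} describe the family of elliptic quintics on the quadric; these guarantee that two special links onto a common Fano threefold $X$ exist and can be composed, so families $(b)$ and $(d)$ are non-empty, while the cubo-cubic constructions recalled in \S\ref{subsubsec3.1.1} supply the transformations of family $(c)$. In particular, over the common quadric $X\subset\PP^4$ both link types $(L.3)$ and $(L.4)$ occur, which is precisely what produces the four ordered combinations feeding families $(b)$ $(i=3,4)$ and $(d)$.

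To separate the twelve types I would attach to $\varphi=\chi_2^{-1}\circ\chi_1$ its bidegree and its $1$-cycle class $\cyc(\varphi)$. The bidegree is read off from the forward and inverse degrees of the constituent links recorded in the proof of Theorem~\ref{thm1} (thus $(n,e)=(2,1)$ for $(L.3)$, $(3,2)$ for $(L.4)$, and $(3,1)$ for $(L.1)$ and $(L.2)$): the degree of $\varphi$ is the inverse-degree of $\chi_2$ times the forward-degree of $\chi_1$, and symmetrically for $\varphi^{-1}$. This already separates most types and recovers the degrees $2,3,4,6$ announced in the introduction; for instance the two orderings in family $(d)$ acquire the asymmetric bidegrees $(4,3)$ and $(3,4)$, while $(L.3,L.3)$ and $(L.4,L.4)$ acquire $(2,2)$ and $(6,6)$.

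The main obstacle is the handful of types sharing a bidegree, above all the general cubo-cubic of family $(a)$ and the composites $(L.1,L.1)$ and $(L.2,L.2)$ of family $(b)$, all of bidegree $(3,3)$. On these the bidegree is silent, so the argument must fall back on finer invariants: the intermediate Fano threefold of the length-two factorization (the cubo-cubic $(a)$ has $\ell=1$ and no intermediate threefold, whereas $(L.1,L.1)$ factors through the complete intersection of two quadrics in $\PP^5$ and $(L.2,L.2)$ through the quintic in $\PP^6$, so these three are separated); the $1$-cycle class $\cyc(\varphi)$; and the connectedness and arithmetic genus of the base-locus scheme $\bas(\varphi)$, which is connected of genus $3$ for the cubo-cubic and reducible otherwise. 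Carrying out these last comparisons is the only place where genuine computation is required, and it is the technical heart of the corollary.
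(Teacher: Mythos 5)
Your opening paragraph is exactly the paper's proof: Corollary~\ref{cor-thm2} is pure bookkeeping from Theorem~\ref{thm2}, namely $1+5+4+2=12$, with the classes understood as the factorization types listed there; the paper offers nothing beyond this count, and in particular neither non-emptiness nor pairwise distinctness of the classes is made part of the corollary (the geometric descriptions that would substantiate distinctness come only later, in Theorems~\ref{thm3} and~\ref{thm4}).

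The additional material you present as ``the technical heart'' therefore goes beyond what is needed, and one step of it would actually fail. The bidegree of a composite is \emph{not} the product of the constituent links' degrees, and it is not even constant on a class: the paper's own Theorem~\ref{thm4} shows that for $\chi_1$ in $(L.3)$ and $\chi_2$ in $(L.4)$ one gets $\bideg(\varphi)=(4,3)$ only when $\bas(\chi^{-1}_1)\cap\bas(\chi^{-1}_2)=\emptyset$, while $\bideg(\varphi)=(3,3)$ when the base loci meet (case (a)(ii)), and symmetrically $(3,4)$ degenerates to $(3,3)$ in case (b)(ii). So the claim that ``the two orderings in family $(d)$ acquire the asymmetric bidegrees $(4,3)$ and $(3,4)$'' is only true generically, and bidegree cannot separate these classes from each other nor from the several other classes of bidegree $(3,3)$. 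Any honest separation argument has to go through the base-locus and $1$-cycle descriptions of Theorems~\ref{thm3} and~\ref{thm4} (genus-$2$ quintic plus a line, rational quartic plus a conic, elliptic quintic plus a point or a trisecant line, etc.), not through degree arithmetic; likewise, your appeal to ``the intermediate Fano threefold'' as an invariant presupposes uniqueness of the factorization, which is nowhere established. Finally, your non-emptiness claim for family $(c)$ (products involving an $(L.5)$ link) is asserted rather than argued; the paper itself is silent on how such a product is composed, so if you keep non-emptiness in your proof this point needs an actual construction rather than a reference to \S\ref{subsubsec3.1.1}.
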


In the theorems below we do not include an explicit description of Pure Special type II Cremona transformations containing links in the class $(L.5)$, where the situation should be clear but to describe it may be longer than interesting. Nevertheless, we note that such a Cremona transformation always has  
bidegree $(9,9)$; the reason is that for a link $\chi$ in class $(L.5)$ and a general line $\ell\subset\PP^3$,  the curve $(\chi^{-1})_*(\ell)$ is a twisted cubic curve which intersects $\bas(\chi)$ at (eight) variable points. 

\begin{thm}\label{thm3}
Let $\varphi=\chi_2^{-1}\circ\chi_1:\PP^3\tor\PP^3$ be a Pure Special type II Cremona transformation not defined somewhere. Suppose $\chi_1,\chi_2$ are both in the same class $(L.i)$, $i=1,\ldots,4$. Then, exactly one of the following assertions hold:

$(a)$ In the class $(L.1)$ case $\varphi$ is a bidegree $(3,3)$ transformation whose base locus scheme contains a smooth genus 2 quintic curve $\Gamma$ and it holds one of the following:
\begin{itemize}
  \item[i)] $\bas(\varphi)=\Gamma\cup L$ where $L$ is a 2-secant line to $\Gamma$ if and only if $\bas(\chi^{-1}_1)\cap \bas(\chi^{-1}_2)=\emptyset$; in this case $\varphi$ is determinantal. 
  \item[ii)] $\bas(\varphi)=\Gamma\cup L$ with an embbeded point where $L$ is a trisecant line to $\Gamma$ if and only if $\bas(\chi^{-1}_1)\cap \bas(\chi^{-1}_2)\neq \emptyset$; in this case $\varphi$ is de Jonqui\`eres.
\end{itemize}

$(b)$  In the class $(L.2)$ case $\varphi$ is a bidegree $(3,3)$ transformation whose base locus scheme contains a smooth rational quartic curve $\Gamma$ and it holds one of the following:
\begin{itemize}  

\item[i)] $\bas(\varphi)=\Gamma\cup D_2$ where $D_2$ is a rank 1 or 3 conic which is 4-secant to $\Gamma$ if and only if  $\bas(\chi^{-1}_1)\cap \bas(\chi^{-1}_2)=\emptyset$; in this case $\varphi$ is  determinantal.

 \item[ii)] $\bas(\varphi)=\Gamma\cup D_2$ where $D_2=L\cup L'$ is a rank 2 conic which is 4-secant to $\Gamma$, with $L$ being 3-secant, if and only if  $\bas(\chi^{-1}_1)\cap \bas(\chi^{-1}_2)\neq \emptyset$; in this case $\varphi$ is determinantal.
\end{itemize}

$(c)$  In the class $(L.3)$ case $\varphi$ is a general bidegree $(2,2)$ transformation whose base locus scheme is the union of a smooth conic and a point not lying on its plane.

$(d)$  In the class $(L.4)$ case  $\varphi$ is a bidegree $(6,6)$ Cremona transformation whose base locus scheme contains an elliptic quintic curve $\Gamma$ and another curve $D$ such that $\cyc(\varphi)=n\Gamma+D$, with $n\geq 4$, and it holds one of the following:
\begin{itemize}  

\item[(i)] the curves $\bas(\chi^{-1}_1)$ and $\bas(\chi^{-1}_2)$ intersect at $m$  (taking into account multiplicities) points, with $0\leq m\leq 10$, and $D$ is supported on the union of  an irreducible curve $C_{10-m}$ of degree $10-m$, which is birationally equivalent to $\bas(\chi^{-1}_2)$, and $m$ lines 3-secant to $\Gamma$; in this case $n=4$, $C_{10-m}$ is smooth in $\PP^3\backslash \Gamma$ and it is $(25-3m)$-secant to $\Gamma$.
 
\item[(ii)]   the curves $\bas(\chi^{-1}_1)$ and $\bas(\chi^{-1}_2)$ intersect  at $m$  (taking into account multiplicities) points, with $m\in\{0,5\}$, and $D$ is supported on the union of $m$ lines 3-secant to $\Gamma$; in this case $n=m=5$ or $n=6$ and $m=0$.

\end{itemize}
\end{thm}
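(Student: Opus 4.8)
The plan is to treat the four classes by a single mechanism and then record the class-specific computations. Write $\chi_i=q_i\circ p_i^{-1}$, where $p_i\colon Z_i\to\spa$ is the blow-up of the base curve $\Gamma_i:=\bas(\chi_i)\subset\spa$ and $q_i\colon Z_i\to X$ is the blow-up of $B_i:=\bas(\chi_i^{-1})\subset X$; here $X$ is the common target Fano $3$-fold, and by the descriptions of \S\ref{subsec3.1} the inverse $\chi_i^{-1}$ is the linear projection of $X\subset\proj^N$ from the span of $B_i$ (a line, a point or a plane) in the classes $(L.1)$--$(L.3)$, and the map given by $|2\calh_X-B_i|$ in the class $(L.4)$. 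Since $\varphi=\chi_2^{-1}\circ\chi_1$, its homaloidal system is the $\chi_1$-pullback of the homaloidal system of $\chi_2^{-1}$, that is, of the degree-$k$ hypersurfaces of $\proj^N$ through $B_2$ restricted to $X$, with $k=1$ in $(L.1)$--$(L.3)$ and $k=2$ in $(L.4)$.

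The first step is the bidegree. From $q_1^*\calh_X\equiv n_1\,p_1^*H-E_1$ with $n_1=3,3,2,3$ across the four classes, the pullback multiplies the surface degree by $n_1$, giving at once bidegree $(3,3)$ in $(a)$ and $(b)$, $(2,2)$ in $(c)$ and $(6,6)$ in $(d)$, and exhibiting $\bas(\varphi)=\Gamma\cup\chi_1^{-1}(B_2)$ as a set, with $\Gamma:=\Gamma_1$. All the geometry thus lies in the residual locus $\chi_1^{-1}(B_2)=(p_1)_*\widetilde{B_2}$, where $\widetilde{B_2}:=q_1^{-1}(B_2)$ is the strict transform, and I would analyse it through the single dichotomy $B_1\cap B_2=\emptyset$ versus $B_1\cap B_2\neq\emptyset$, i.e. $\bas(\chi_1^{-1})\cap\bas(\chi_2^{-1})$. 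Indeed $B_1$ is the indeterminacy locus of $\chi_1^{-1}$, which blows it up, so each point of $B_1\cap B_2$ contributes to $\chi_1^{-1}(B_2)$ a whole curve contracted by $\chi_1$, namely a trisecant of $\Gamma$ (a chord of the conic $\Gamma$ in class $(L.3)$); these curves sweep the divisor contracted by $\chi_1$ of Examples \ref{exas1}.

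Running the case analysis, in $(a)$ ($L.1$), with $B_i=L_i$ a line on the $(2,2)$ complete intersection $X$: if $L_1\cap L_2=\emptyset$ the projection $\chi_1^{-1}$ carries $L_2$ to a line $L$ that is a $2$-secant of $\Gamma$, and $\bas(\varphi)=\Gamma\cup L$ is reduced; if $L_1\cap L_2$ is a point $x$, then $L_2\setminus\{x\}$ projects to a single point while $x\in B_1$ blows up to the trisecant $\ell_x$, so $\bas(\varphi)=\Gamma\cup\ell_x$ acquires an embedded point, the very feature of the normal form (\ref{eq:jonq}). In $(b)$ ($L.2$), with $B_i=C_i$ a conic and $\chi_1^{-1}$ the projection from the plane of $C_1$, the residual $\chi_1^{-1}(C_2)$ is a conic $D_2$; by Lemma \ref{lem4} one has $|C_1\cap C_2|\in\{0,1\}$, and a degree count shows $D_2$ has rank $1$ or $3$ when the intersection is empty, and splits as a rank $2$ conic $L\cup L'$ with $L$ a trisecant when it is one point, the degree lost by the conic $C_2$ projecting to the line $L'$ being balanced by the trisecant $L$, so that no embedded point appears. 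Case $(c)$ ($L.3$) is immediate: for $x_1\neq x_2$ the point $B_2=x_2$ projects to a point off the plane of the conic $\Gamma$, and $\bas(\varphi)=\Gamma\cup\{\text{point}\}$ is smooth, so $\varphi$ is a general $(2,2)$ transformation. The secancy and transversality claims are then read from the surface models of \S\ref{subsec3.1} via the projection-formula computations of \S\ref{sec2} (e.g. $H\cdot E_1^2=-d$).

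The main obstacle is the bidegree $(6,6)$ class $(d)$ ($L.4$), where $\chi_1^{-1}=|2\calh_X-C_1|$ is quadratic rather than a projection. Here I would compute $\deg\chi_1^{-1}(C_2)=(2\calh_X-C_1)\cdot C_2=10-m$ on $X$, with $m:=|C_1\cap C_2|$, producing the irreducible component $C_{10-m}$, while each of the $m$ intersection points blows down onto a $3$-secant of $\Gamma$, so that $D=C_{10-m}\cup(m\text{ trisecants})$. The admissible values of $m$, and of the multiplicity $n$ in $\cyc(\varphi)=n\Gamma+D$, are separated by Corollary \ref{cor6} according as $C_1,C_2$ do or do not lie in a common hyperquadric section of $X$; the degree identity $5n+\deg D=36-6=30$ pins down $n\in\{4,5,6\}$, and the residual secancy $25-3m$ of $C_{10-m}$ with $\Gamma$ follows by intersecting on $Z_1$. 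The genuinely delicate point is that in the degenerate positions $C_2$ is absorbed into $\Gamma$, so that no irreducible component of $D$ survives and $D$ reduces to trisecants, which is case (ii). It remains to fix the birational type: in the reduced situations of $(a)$ and $(b)$ I would exhibit the $4\times 3$ matrix of linear forms from the arithmetically Cohen--Macaulay structure of $\bas(\varphi)$ (as in the cubo-cubic case $(L.5)$), proving $\varphi$ \emph{determinantal}; and in $(a)$(ii) I would match the embedded point at $\Gamma\cap\ell_x$ with the model (\ref{eq:jonq}), proving $\varphi$ \emph{de Jonqui\`eres}. Certifying the scheme structure of $\bas(\varphi)$, and in particular the embedded point, is the most technical part and the last step I would settle.
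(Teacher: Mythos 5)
Your overall strategy coincides with the paper's: write $\chi_i=q_i\circ p_i^{-1}$, get the bidegrees from $q_1^*\calh_X=n_1H-E_1$ (so $3,3,2$ and, with $k=2$, $6$), identify $\bas(\varphi)=\Gamma\cup\chi_1^{-1}(B_2)$ set-theoretically, and run the dichotomy on $\bas(\chi_1^{-1})\cap\bas(\chi_2^{-1})$; your degree counts in cases (b) and (d)(i) (the degree $10-m$, the $(25-3m)$-secancy, and $36-6=5n+\deg D$) are exactly the paper's. But there are genuine gaps. The most serious is that the claims you explicitly defer — the scheme structure of $\bas(\varphi)$, the embedded point, and above all that $\varphi$ is \emph{determinantal} in (a)i, (b)i, (b)ii and \emph{de Jonqui\`eres} in (a)ii — are part of the statement, and your plan gives no independent route to them. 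In particular you cannot ``exhibit the $4\times 3$ matrix from the arithmetically Cohen--Macaulay structure of $\bas(\varphi)$'' because ACM-ness is precisely what must be proved: the paper proves it by liaison (\cite[\S 3]{PS74}), using that the strict transform of a general line is a twisted cubic so that two general cubics of the homaloidal system cut $\Gamma\cup D_2\cup\pi_1(\gamma)$, linking $\Gamma\cup D_2$ to an ACM curve of genus $0$; and in the intersecting case (b)ii it rules out de Jonqui\`eres by Remark \ref{rem-end} (non-planarity of $\varphi^{-1}_*(\ell)$). None of this machinery appears in your proposal, so the birational-type half of (a) and (b) is unproven.

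Second, in case (d) you misidentify the (i)/(ii) dichotomy: it is \emph{not} separated by Corollary \ref{cor6}, i.e. by whether $C_1,C_2$ lie in a common hyperquadric section of $X$. The paper's criterion is whether $C_2\subset q_1(E_1)$, equivalently whether $q_{1*}^{-1}(C_2)\subset E_1$, i.e. whether $p_1(q_{1*}^{-1}(C_2))=\Gamma$ — the ``absorption'' you mention later but never pin down. The two conditions are inequivalent: case (i) of the theorem allows every $m$ with $0\leq m\leq 10$, which by Corollary \ref{cor6} spans \emph{both} of that corollary's alternatives (e.g. $m=6,7$ forces a common hyperquadric section, $m=0$ forces the contrary), so cor6 can only be used to bound $m$, not to split the cases. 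Relatedly, in case (ii) your degree identity $5n+m=30$ by itself admits $(n,m)=(4,10)$; to reach $m\in\{0,5\}$ you need $n>4$, which the paper derives exactly from $p_1(q_{1*}^{-1}(C_2))=\Gamma$ (the strict transform contributes extra multiplicity along $\Gamma$), and which your plan never establishes. A minor further point: for case (a) the paper simply invokes \cite{AGSP}, whereas your direct sketch asserts without proof both the $2$-secancy of $L=\pi_{L_1}(L_2)$ and the appearance of the embedded point, so as written that case too rests on unproved geometric input.
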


\begin{proof}
Since $\varphi$ is not defined somewhere, then  $\bas(\chi^{-1}_1)\neq \bas(\chi^{-1}_2)$. The case (a) follows from \cite{AGSP} and the case (c) is essentially trivial. Let us consider the cases (b) and (d).

\smallskip

\noindent{\bf Case} (b). We recall and keep all notations from \S \ref{subsubsec3.1.2}, $\chi_1=q\circ p^{-1}$ and $\Gamma:=\bas(\chi_1)$. Then, for each $i=1,2$, the birational map $\chi^{-1}_i=\pi_i:X\tor \PP^3$ is the restriction to $X$ of a general projection from a plane, $\bas(\pi_1)=C\in\calc_2, \bas(\pi_2)=C'\in\calc_2$. 

The strict transform $S\subset X$ of a general plane by $\pi_2$ is a del Pezzo surface of degree 5 containing the conic $C'=\bas(\pi_2)$. As we saw $S$ is a hyperplane section of $X$ from which we deduce it intersects $C=\bas(\pi_1)$ at two points. Therefore the strict transform $(\chi_1)_*^{-1}(S)=(\pi_1)_*(S)$ is a cubic surface. By symmetry we get $\varphi$ has bidegree $(3,3)$. 

Furthermore, let $\ell\subset\PP^3$ be a general line. The strict transform $\gamma:=(\pi^{-1}_2)_*(\ell)$ of $\ell$ is a twisted cubic 2-secant to $C'$. By construction $\gamma\cup C'$ is a hyperplane section of $X$, general among those containing the plane $P'\supset C'$.

\smallskip 

Suppose $C\cap C'=\emptyset$. By genericity on $\ell$ , we may suppose that the linear 3-space $\langle\gamma\rangle$  spanned by $\gamma$ does not intersect the plane $P$ of $C$. We deduce $\pi_1$ restricts isomorphically on $\gamma$ and then $\pi_1(\gamma)$ is a twisted cubic curve in $\PP^3$.

Take irreducible cubic surfaces, say $W,W'\subset\PP^3$, such that $W\cap W'=  \Gamma\cup D_2\cup \pi_1(\gamma)$, where $D_2:=\pi_1(C')$ is a conic which has rank 3 (i.e. it is smooth) if $P\cap P'=\emptyset$ and has rank 1 otherwise. Since a twisted cubic curve is arithmetically Cohen-Macaulay of genus 0, by \emph{liaison} $\Gamma\cup D_2$ is an arithmetically Cohen-Macaulay curve of degree 6 and arithmetic genus 3, whose ideal is generated by 4 independent cubic forms which are the maximal minors of a $4\times 3$ matrix of linear forms (\cite[\S
3]{PS74}). Hence $\varphi$ is determinantal. By adjunction on $S$, we have that arithmetic genus 3 for $\Gamma\cup D_2$ implies $D_2$ is 4-secant to $\Gamma$.

Now suppose  $C\cap C'$ consists of a unique point, say $x\in X$; by Lemma \ref{lem4} there is no other possibility. Then $P\cap P'=\{x\}$ from which it follows $P'$ contracts (via $\pi_1$) to a line $L'$. We deduce this line is a chord of $\varphi^{-1}_*(\ell)=(\pi_1)_*(\gamma)$ and this curve is a twisted curve, which as before implies $\varphi$ is determinantal: in fact, otherwise it would be de Jonqui\`eres and $\varphi^{-1}_*(\ell)$ contradicts what we said in Remark \ref{rem-end}.

Furthermore, $\pi_2\circ q$ in not defined along the line (fiber) $F_x=q^{-1}(x)$ which corresponds via $p$ to a line intersecting $L$; that line is 3-secant to $\Gamma$ since $H=\calh_Z-F$ and $\calh_Z=3H-E$, $\calh_Z\in |q^*\calo_X(1)|$,  implies $E=2\calh_Z-3F$, then $E\cdot F_x=3$.

If $D_2=L\cup L'$, then $D_2$ is 4-secant to $\Gamma$, as before, 
which completes the proof of the statement (b).

\smallskip

\noindent{\bf Case} (d). Now $C,C'\in\calc_5$ are elliptic quintic curves on the hyperquadric $X\subset \PP^4$. The reduced structure on $\bas(\varphi)$ is then $\Gamma\cup p(q^{-1}(C'))$, where $q\circ p^{-1}=\chi_1$. The (classes of)  exceptional divisors $E$ and $F$ of $p$ and $q$, respectively, satisfy
\begin{equation}
\calh_Z:=q^*(\calh_X)=3H-E, F=5H-2E,\label{eq3.2-1}
\end{equation}
or equivalently
\begin{equation}
H= 2\calh_Z-F, E= 5\calh_Z-3F.\label{eq3.2-2}
\end{equation}
Then $\varphi$ is defined by a linear subsystem of $p_*q^*(|2\calh_X|)=p_*(|6H-2E|)$, which consists of degree 6 surfaces passing through $\Gamma$ with multiplicity at least $2$; in particular, $\cyc(\varphi)=n\Gamma+D$ with $n\geq 4$ and, by symmetry, $\bideg(\varphi)=(6,6)$. 

On the other hand,  $m=q^{-1}_*(C')\cdot F$ by definition.

Suppose $C'$ is not contained in $q(E)$, that is $q^{-1}_*(C')\not\subset E$. The restriction of $p$ to $q^{-1}_*(C')$ induces a birational morphism which is injective on $Z\backslash E$. Then $C_{10-m}:=p(q^{-1}_*(C'))$ is a curve of degree $10-m$, smooth on $\PP^3\backslash \Gamma$, which intersects $\Gamma$ at a zero scheme of length $s=q^{-1}_*(C')\cdot E$. We deduce 
\[
s=q^{-1}_*(C')\cdot (5\calh_Z-3F)= 25-3m.\]
Take general elements  $S,S'$ in the linear system defining $\varphi$. The scheme theoretical intersection $S\cap S'$ defines a 1-cycle of degree $36$ supported on
\[\Gamma\cup p(q^{-1}_*(C'))\cup p(q^{-1}(C\cap C').\]
By taking into account multiplicities, the last part in the union above consists of  $m$ lines 3-secant to $\Gamma$. By computing degrees we deduce that $\cyc(\varphi)$ has degree
\[36-6=n\deg\Gamma+10,\]
that is, $n=4$.

\smallskip 

Now suppose $C'\subset q(E)$. Since $E$ is a projective lines bundle over $\Gamma$ we deduce $p(q_*^{-1}(C'))=\Gamma$, then $n>4$. The last assertions follow easily.  
\end{proof}

\begin{rem}\label{rem-table1}
\noindent(1) Transformations as in (a)i  correspond to type $T_{33}^{(3)}$  in table \cite[pag. 185]{SR}. Analogously, transformations as in (b)i correspond to type $T_{33}^{(4)}$ in \cite[pag. 185]{SR} together with degenerations of this type.

\noindent(2) We do not know if the case (ii) in part (d) of the theorem above may effectively occur. 
\end{rem}

\begin{thm}\label{thm4}
Let $\varphi=\chi_2^{-1}\circ\chi_1:\PP^3\tor\PP^3$ be a Pure Special type II  Cremona transformation. Suppose $\chi_1$  and $\chi_2$ are links of type II either in classes  $(L.3)$ and  $(L.4)$, respectively, or conversely.  Then, exactly one of the following assertions hold: 

$(a)$ In the first case the base locus scheme of $\varphi$ contains a smooth conic $\Gamma$ and it holds one of the following:
\begin{itemize}
  \item[(i)] $\bas(\varphi)_{red}=\Gamma\cup D_5$ where $D_5$ is a genus 1 quintic curve, with at most a double point, which is 5-secant to $\Gamma$ if and only if $\bas(\chi^{-1}_1)\cap \bas(\chi^{-1}_2)=\emptyset$; in this case $\bideg(\varphi)=(4,3)$ and $\cyc(\varphi)=4\Gamma+D_5$.
  \item[(ii)] $\bas(\varphi)_{red}=\Gamma\cup D_4$  where $D_4$ is an elliptic quartic curve 3-secant to $\Gamma$ if and only if $\bas(\chi^{-1}_1)\cap \bas(\chi^{-1}_2)\neq \emptyset$; in this case $\bideg(\varphi)=(3,3)$ and $\cyc(\varphi)=\Gamma+D_4$.
\end{itemize}

$(b)$ In the second case the base locus scheme of $\varphi$ contains an elliptic quintic curve $\Gamma$ and  it holds one of the following:  

\begin{itemize}
  \item[(i)] $\bas(\varphi)=\Gamma\cup \{o'\}$ where $o'$ is either an infinitely near point or an isolated point if and only if $\bas(\chi^{-1}_1)\cap \bas(\chi^{-1}_2)=\emptyset$; in this case $\bideg(\varphi)=(3,4)$ and $\cyc(\varphi)=\Gamma$.
\item[(ii)] $\bas(\varphi)=\Gamma\cup L$  where $L$ is a line 3-secant to $\Gamma$ if and only if $\bas(\chi^{-1}_1)\cap \bas(\chi^{-1}_2)\neq \emptyset$; in this case $\bideg(\varphi)=(3,3)$ and $\cyc(\varphi)=\Gamma+L$.
\end{itemize}
\end{thm}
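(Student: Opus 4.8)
The plan is to handle the two cases $(a)$ and $(b)$ of Theorem \ref{thm4} by the same strategy already used in the proof of Theorem \ref{thm3}: fix the factorization $\varphi=\chi_2^{-1}\circ\chi_1$, pull back the homaloidal system of $\chi_2^{-1}$ through the resolution of $\chi_1$, and compute bidegrees and the $1$-cycle class by intersection theory on the blow-up $Z$. In case $(a)$ we take $\chi_1$ in class $(L.3)$, so $\chi_1=q\circ p^{-1}$ where $p$ blows up a smooth conic $\Gamma$ and $q:Z\to X$ is the projection described in \S\ref{subsubsec3.1.1}, with $\calh_Z=q^*\calo_X(1)=2H-E$ and $F=3H-2E$ the exceptional divisor of $q$ (these relations follow from the $(L.3)$ computations exactly as (\ref{eq3.2-1}) was obtained in case $(L.4)$). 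The map $\chi_2^{-1}=\pi:X\tor\PP^3$ is then the link in class $(L.4)$, defined on $X$ by $|2\calh_X-C'|$ for an elliptic quintic $C'=\bas(\chi_2^{-1})\in\calc_5$. Hence $\varphi$ is given by a subsystem of $p_*q^*|2\calh_X-C'|$; substituting the divisor relations I read off the degree of a general member and therefore the bidegree, and I compute $\cyc(\varphi)$ by intersecting two general members as in case $(d)$ of Theorem \ref{thm3}.

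The heart of the argument is the dichotomy according to whether $\bas(\chi_1^{-1})\cap\bas(\chi_2^{-1})$ is empty. First I would show that this intersection condition on $X$ governs the component structure of $\bas(\varphi)$ downstairs: the indeterminacy locus of $\pi\circ q$ lives over the points of $C\cap C'$ (where $C=\bas(\chi_1^{-1})$ is the conic contracted by $q$), and each such point of $X$ corresponds via $p$ to a line meeting $\Gamma$, which is $3$-secant to $\Gamma$ by the same numerical computation $E\cdot F_x=3$ used in case $(b)$ of Theorem \ref{thm3}. When the two base loci are disjoint, no fiber of $q$ is contracted by $\pi$, the strict transform $p(q_*^{-1}(C'))$ is an honest curve birational to $C'$, and I would identify its degree and its secancy with $\Gamma$ by computing $p(q_*^{-1}(C'))\cdot$ (hyperplane) and $q_*^{-1}(C')\cdot E$ on $Z$; the resulting elliptic quintic $D_5$ (in case $(a)$i) or, symmetrically, the embedded/isolated point $o'$ (in case $(b)$i) then drops out. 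When the base loci meet, a single $3$-secant line is split off and the remaining component drops in degree by one, producing the quartic $D_4$ (case $(a)$ii) or the $3$-secant line $L$ (case $(b)$ii), with the bidegree falling from $(4,3)$ to $(3,3)$ correspondingly; I would confirm each numerical claim by a degree count on $S\cap S'$ for general members of the defining system, exactly as in the displayed computation $36-6=n\deg\Gamma+10$ of Theorem \ref{thm3}.

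The forward direction of each ``if and only if'' follows once the divisor bookkeeping is fixed; the reverse direction requires checking that the stated base-locus geometry forces the corresponding intersection behaviour of $\bas(\chi_1^{-1})$ and $\bas(\chi_2^{-1})$, which I would obtain by running the same construction in reverse using Lemma \ref{lem5} and Corollaries \ref{cor5}, \ref{cor6} to control how two quintics in $\calc_5$ (or a conic and a quintic) can meet. The main obstacle will be the careful analysis of the degenerate subcase in $(b)$i, distinguishing an \emph{infinitely near} point from an isolated point $o'$ in $\bas(\varphi)$: this is not settled by a single intersection number but depends on the local behaviour of $\pi$ along the contracted fiber, so I expect to need an explicit local model of the projection $\pi_x$ of the quadric near $C'$ to decide which of the two possibilities occurs. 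The remaining secancy numbers ($5$-secant for $D_5$, $3$-secant for $D_4$ and for $L$) are routine adjunction-on-$S$ or intersection computations on $Z$ once the components are correctly identified, and I would present them compactly rather than in full.
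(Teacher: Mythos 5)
Your skeleton is the paper's own: resolve $\chi_1$ on the blow-up $Z$, transport the linear system defining $\chi_2^{-1}$ through $p_*q^*$, and split into subcases according to whether $\bas(\chi_1^{-1})$ meets $\bas(\chi_2^{-1})$. But two of your concrete steps are wrong, and both would derail case $(a)$. First, your divisor bookkeeping for the $(L.3)$ link is false: the exceptional divisor of $q$ is the strict transform of the plane of the conic, so $F=H-E$, equivalently $H=\calh_Z-F$ and $E=\calh_Z-2F$. This is forced by the ramification formula: $K_Z=-4H+E$ and $K_Z=-3\calh_Z+2F$ (since $q$ contracts $F\simeq\PP^2$ to a smooth point of the quadric, with discrepancy $2$), whereas your $F=3H-2E$ is compatible with no discrepancy at all --- you have transplanted the shape of the $(L.4)$ relations (\ref{eq3.2-2}) into the $(L.3)$ setting. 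The error is not cosmetic: your relations give $E=3\calh_Z-2F$, so the secancy count in $(a)$(i) comes out $q_*^{-1}(C_5)\cdot E=15$ instead of $5$; and in $(a)$(ii), where the transported system lies in $|2\calh_Z-F|$ because its members pass through $o$, the push-forward class is $3H-E$ (cubics through $\Gamma$, giving $\bideg(\varphi)=(3,3)$ and $\cyc(\varphi)=\Gamma+D_4$) with the correct relations, but would be $H$ --- planes --- with yours.

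Second, your uniform mechanism ``over a point of $\bas(\chi_1^{-1})\cap\bas(\chi_2^{-1})$ a $3$-secant line splits off'' is correct only in case $(b)$. In case $(a)$ you misidentify $\bas(\chi_1^{-1})$: it is not ``the conic contracted by $q$'' but the center $o$ of the projection, a single point, and the fiber of $q$ over it is all of $F\simeq\PP^2$, not a line. Accordingly, in $(a)$(ii) no $3$-secant line appears in $\bas(\varphi)$ at all; what actually happens is that projection from $o\in C_5$ drops $C_5$ to an elliptic quartic $D_4$, whose $3$-secancy to $\Gamma$ is the computation $q_*^{-1}(C_5)\cdot E=q_*^{-1}(C_5)\cdot(\calh_Z-2F)=5-2=3$, while the multiplicity of $\Gamma$ in $\cyc(\varphi)$ drops from $4$ to $1$. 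The computation $E\cdot F_o=3$ that you quote does produce the $3$-secant line, but only in case $(b)$(ii), where the fiber $F_o$ of $q$ over $o\in C_5$ is a ruling of the exceptional surface $F\in|5H-2E|$ of the $(L.4)$ link. Finally, the $(b)$(i) distinction you propose to settle with a local model of the projection needs no such thing: $o'$ is infinitely near exactly when $o\in q(E)$, and isolated exactly when $o\notin q(E)$, which is how the paper disposes of it in three lines.
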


\begin{proof}
Recall the description given in \S\ \ref{subsubsec3.1.3}. The linear system difining a special link of class $(L.4)$ consists of hyperquadric sections of $X$ containing a quintic elliptic curve $C_5\in\calc_5$; we denote by $|2\calh_X-C_5|$ such a linear system. On the other hand, a special link of class $(L.3)$ is the projection of $X\subset\PP^4$ from a point $o\in X$.

We keep notations as in the previous theorem. The divisors $E,F,H,\calh_Z$ always refer to the morphisms $p$ and $q$ where $\chi_1=q\circ p^{-1}$.

\smallskip

{\noindent}(a) In this case we have
\[H=\calh_Z-F,\ \calh_Z=2H-E\]
where $q(F)=\{o\}$ and $p(E)=\Gamma$.

First suppose $o\not\in C_5$. Then $C_5$ projects birationally to a quintic curve $D_5$. Notice that $D_5$ has at most one double point, and this occurs only when $o$ belongs to a 2-secant line to $C_5$ (see Corollary \ref{cor5}). This curve intersects $\Gamma$ in a $0$-scheme of length $s:=q_*^{-1}(C_5)\cdot E$. Since $E=\calh_Z-2F$ and $F\cap q_*^{-1}(C_5)=\emptyset$, we get $s=5$. Moreover, $p_*q^*|2\calh_X-C_5|\subset p_*|2\calh_Z|=|4H-2E|$, from which we deduce that $\varphi$ is defined by a linear system of quartic surfaces containing $\Gamma$ with multiplicity $\geq 2$; in particular $\deg(\varphi)=4$ and $\cyc(\varphi)=n\Gamma+D_5$ with $n\geq 4$. 

On the other hand, since a twisted cubic on $X$ which does not pass through $o$ projects to a cubic curve we obtain $\deg(\varphi^{-1})=3$, hence $\bideg(\varphi)=(4,3)$. 

Finally, the 1-cycle above has degree $16-3=2n+5$, then $n=4$, which completes the proof of (i).

Now suppose $o\in C_5$; in this case $C_5$ projects to an elliptic quartic curve $D_4$. Hence $F\cdot q_*^{-1}(C_5)=1$ and $q^{-1}(C_5)=q_*^{-1}(C_5)\cup F$. So $D_4\cdot E$=$D_4\cdot (\calh_Z-2F)=4$. We deduce that $D_4$ is an elliptic quartic curve 3-secant to $\Gamma$. The rest of the statement (ii) follows by arguing as before.

\smallskip

\noindent(b) The assertion relative to bidegrees follows from (a). 

To prove the remaining part, first suppose $o\not\in C_5$. If $o\in q(E)$, then $o':=q^{-1}(o)\in E$ and $p(o')\in\Gamma$. For a general line $\ell\subset\P^3$, the strict transform $(\chi_2^{-1}\circ q)^{-1}_*(\ell)$ is a curve passing through $o'$, then $\varphi\circ p$ in not defined at $o'$; we deduce $o'$ is an infinitely near point lying over $\Gamma$.  

If $o\not \in q(E)$, then  $o':=p(q^{-1}(o))\not\in\Gamma$ defines an isolated point of $\bas(\varphi)$.

Finally, we suppose $o\in C_5$. In this case the fiber $F_o:=q^{-1}(o)\subset F$ is a smooth rational curve. From (\ref{eq3.2-2}) we get 
\[H\cdot F_o=1,\ \ E\cdot F_o=3.\]
Then $p(F_o)$ is a line 3-secant to $\Gamma$.
\end{proof}

\begin{rem}\label{rem-table2}
With a small additional effort we may prove that the bidegree $(3,3)$ Cremona transformations appearing in Theorem \ref{thm4} are determinantal. In fact, in cases (a)ii and (b)ii, the base locus $\bas(\varphi)$ is a sextic curve of arithmetic genus 3 and one may prove that such a curve is arithmetically Cohen-Macaulay (or use \cite{Pa97}). The birational maps in (a)ii and (b)ii correspond, respectively, to the cases $T^{(6)}_{33}$ and $T^{(2)}_{33}$ in table \cite[pag. 185]{SR}.
\end{rem}

From the theorems above it follows (to compare with Remarks \ref{rem-table1} and \ref{rem-table2}):

\begin{cor}\label{corfinal}
The Cremona transformations of types $T_{33}^{(1)}, T_{33}^{(5)}, T_{33}^{(7)}$ and $ T_{33}^{(8)}$ in table \cite[pag. 185]{SR} are not Pure Special type II. 
\end{cor}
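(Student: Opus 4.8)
The plan is to establish Corollary \ref{corfinal} by a process of elimination, using the complete classification of Pure Special type II transformations obtained in Theorems \ref{thm2}, \ref{thm3} and \ref{thm4} together with the geometric invariants attached to each class. The key observation is that every bidegree $(3,3)$ Pure Special type II transformation appearing in our classification has been explicitly matched to a specific Semple--Roth type $T_{33}^{(k)}$ in Remarks \ref{rem-table1} and \ref{rem-table2}: indeed, case (a)i of Theorem \ref{thm3} is identified with $T_{33}^{(3)}$, case (b)i and its degeneration (b)ii with $T_{33}^{(4)}$, and the cases (a)ii and (b)ii of Theorem \ref{thm4} with $T_{33}^{(6)}$ and $T_{33}^{(2)}$, respectively. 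Since the transformations classified in Theorem \ref{thm2} are \emph{all} the Pure Special type II transformations with $\ell\leq 2$, and all bidegree $(3,3)$ representatives among them fall into the types $T_{33}^{(2)}, T_{33}^{(3)}, T_{33}^{(4)}, T_{33}^{(6)}$, the remaining types $T_{33}^{(1)}, T_{33}^{(5)}, T_{33}^{(7)}, T_{33}^{(8)}$ cannot be Pure Special type II.

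First I would assemble the exhaustive list of bidegree $(3,3)$ transformations arising in the classification. By Theorem \ref{thm2}, a Pure Special type II transformation of bidegree $(3,3)$ with $\ell=1$ is a general cubo-cubic (determinantal) map, while those with $\ell=2$ must appear in Theorems \ref{thm3} or \ref{thm4}. Scanning the statements of these theorems, the bidegree $(3,3)$ cases are precisely: Theorem \ref{thm3}(a)i and (a)ii (class $(L.1)$), Theorem \ref{thm3}(b)i and (b)ii (class $(L.2)$), Theorem \ref{thm4}(a)ii (classes $(L.3)$ and $(L.4)$), and Theorem \ref{thm4}(b)ii (the converse). I would then record the base locus data for each---the type of the curve $\Gamma$ (genus $2$ quintic, rational quartic, elliptic quartic, etc.), the auxiliary curve or line, and whether the map is determinantal or de Jonqui\`eres. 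Each of these is then matched against the corresponding entry in the Semple--Roth table, using the identifications already furnished in the Remarks.

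The decisive step is to verify that the four excluded types $T_{33}^{(1)}, T_{33}^{(5)}, T_{33}^{(7)}, T_{33}^{(8)}$ are genuinely distinct from the ones we realize, i.e.\ that none of them coincides with any $T_{33}^{(2)}, T_{33}^{(3)}, T_{33}^{(4)}, T_{33}^{(6)}$. This is where I expect the main obstacle to lie, since it requires reading off the geometric description of each $T_{33}^{(k)}$ from \cite[pag.\ 185]{SR} and confirming that the base locus schemes are genuinely inequivalent (up to projective change of coordinates). The cleanest way to do this is through a discrete invariant that the classification controls: the structure of $\cyc(\varphi)$ and the configuration of the base locus (in particular, the arithmetic genus and Cohen--Macaulay property of the degree $6$ base curve, and the nature of the embedded point in the de Jonqui\`eres case). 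If the Semple--Roth types are separated by such invariants, then the four remaining types must have base loci that do not occur in our list, hence cannot arise from a product of at most two special links of type II.

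Concretely, I would argue by contradiction: if some $T_{33}^{(j)}$ with $j\in\{1,5,7,8\}$ were Pure Special type II, then by Theorem \ref{thm2} it would have to coincide with one of the realized transformations, hence with one of $T_{33}^{(2)}, T_{33}^{(3)}, T_{33}^{(4)}, T_{33}^{(6)}$; but the Semple--Roth classification lists the $T_{33}^{(k)}$ as \emph{pairwise distinct} types, so this is impossible. The subtlety is purely bookkeeping: one must be confident that the correspondences in Remarks \ref{rem-table1} and \ref{rem-table2} are exact (each realized family hits exactly one $T_{33}^{(k)}$ and conversely), and that no realized family secretly produces one of the four excluded types through a special degeneration. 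Once the matching is pinned down the corollary is immediate, so the proof is essentially a one-line deduction from the preceding classification combined with the completeness of the Semple--Roth table.
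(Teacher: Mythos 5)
Your proposal is correct and is essentially the paper's own argument: the paper states this corollary as an immediate consequence of Theorems \ref{thm2}, \ref{thm3} and \ref{thm4}, read against the identifications with the Semple--Roth types recorded in Remarks \ref{rem-table1} and \ref{rem-table2}, which is precisely the elimination/bookkeeping you describe. The caveat you flag --- checking that no realized family (e.g.\ the de Jonqui\`eres case of Theorem \ref{thm3}(a)ii, or the $\ell=1$ general cubo-cubic) secretly lands on one of the four excluded types --- is left implicit in the paper as well, so your reconstruction is no less complete than the published proof.
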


\end{document}